
\documentclass[12pt, a4paper, reqno]{amsart}
\usepackage{latex_base}
\usepackage{macros}
\usepackage{tablefootnote}

\newcommand{
\begin{tikzpicture}

\definecolor{color0}{rgb}{0.12156862745098,0.466666666666667,0.705882352941177}

\begin{axis}[
ylabel={number of instances},
y label style={at={(axis description cs:-0.05,.5)},anchor=south},
xmin=-0.69, xmax=5.69,
ymin=0, ymax=7241.85,
xtick={0.5,1.5,2.5,3.5,4.5,5.5},
xticklabels={-1e-06,0,1e-06,0.01,1,},
tick align=outside,
xticklabel style = {rotate=30},
tick pos=left,
x grid style={white!69.01960784313725!black},
y grid style={white!69.01960784313725!black}
]
\draw[fill=color0,draw opacity=0] (axis cs:-0.4,0) rectangle (axis cs:0.4,7);
\draw[fill=color0,draw opacity=0] (axis cs:0.6,0) rectangle (axis cs:1.4,136);
\draw[fill=color0,draw opacity=0] (axis cs:1.6,0) rectangle (axis cs:2.4,6897);
\draw[fill=color0,draw opacity=0] (axis cs:2.6,0) rectangle (axis cs:3.4,642);
\draw[fill=color0,draw opacity=0] (axis cs:3.6,0) rectangle (axis cs:4.4,1328);
\draw[fill=color0,draw opacity=0] (axis cs:4.6,0) rectangle (axis cs:5.4,595);
\node at (axis cs:-0.1,17)[
  scale=0.5,
  anchor=base west,
  text=black,
  rotate=0.0
]{ 7};
\node at (axis cs:0.9,146)[
  scale=0.5,
  anchor=base west,
  text=black,
  rotate=0.0
]{ 136};
\node at (axis cs:1.9,6907)[
  scale=0.5,
  anchor=base west,
  text=black,
  rotate=0.0
]{ 6897};
\node at (axis cs:2.9,652)[
  scale=0.5,
  anchor=base west,
  text=black,
  rotate=0.0
]{ 642};
\node at (axis cs:3.9,1338)[
  scale=0.5,
  anchor=base west,
  text=black,
  rotate=0.0
]{ 1328};
\node at (axis cs:4.9,605)[
  scale=0.5,
  anchor=base west,
  text=black,
  rotate=0.0
]{ 595};
\end{axis}

\end{tikzpicture}

}{
\begin{tikzpicture}

\definecolor{color0}{rgb}{0.12156862745098,0.466666666666667,0.705882352941177}

\begin{axis}[
ylabel={number of instances},
y label style={at={(axis description cs:-0.05,.5)},anchor=south},
xmin=-0.69, xmax=5.69,
ymin=0, ymax=7241.85,
xtick={0.5,1.5,2.5,3.5,4.5,5.5},
xticklabels={-1e-06,0,1e-06,0.01,1,},
tick align=outside,
xticklabel style = {rotate=30},
tick pos=left,
x grid style={white!69.01960784313725!black},
y grid style={white!69.01960784313725!black}
]
\draw[fill=color0,draw opacity=0] (axis cs:-0.4,0) rectangle (axis cs:0.4,7);
\draw[fill=color0,draw opacity=0] (axis cs:0.6,0) rectangle (axis cs:1.4,136);
\draw[fill=color0,draw opacity=0] (axis cs:1.6,0) rectangle (axis cs:2.4,6897);
\draw[fill=color0,draw opacity=0] (axis cs:2.6,0) rectangle (axis cs:3.4,642);
\draw[fill=color0,draw opacity=0] (axis cs:3.6,0) rectangle (axis cs:4.4,1328);
\draw[fill=color0,draw opacity=0] (axis cs:4.6,0) rectangle (axis cs:5.4,595);
\node at (axis cs:-0.1,17)[
  scale=0.5,
  anchor=base west,
  text=black,
  rotate=0.0
]{ 7};
\node at (axis cs:0.9,146)[
  scale=0.5,
  anchor=base west,
  text=black,
  rotate=0.0
]{ 136};
\node at (axis cs:1.9,6907)[
  scale=0.5,
  anchor=base west,
  text=black,
  rotate=0.0
]{ 6897};
\node at (axis cs:2.9,652)[
  scale=0.5,
  anchor=base west,
  text=black,
  rotate=0.0
]{ 642};
\node at (axis cs:3.9,1338)[
  scale=0.5,
  anchor=base west,
  text=black,
  rotate=0.0
]{ 1328};
\node at (axis cs:4.9,605)[
  scale=0.5,
  anchor=base west,
  text=black,
  rotate=0.0
]{ 595};
\end{axis}

\end{tikzpicture}

}
\newcommand{
\begin{tikzpicture}

\definecolor{color0}{rgb}{1,0.647058823529412,0}

\begin{axis}[
ylabel={number of instances},
y label style={at={(axis description cs:-0.05,.5)},anchor=south},
xmin=-0.7175, xmax=5.7175,
ymin=0, ymax=6759.9,
xtick={0.5,1.5,2.5,3.5,4.5,5.5},
xticklabels={1e-06,0.01,1,10,100,},
tick align=outside,
xticklabel style = {rotate=30},
tick pos=left,
x grid style={white!69.01960784313725!black},
y grid style={white!69.01960784313725!black}
]
\draw[fill=red,draw opacity=0] (axis cs:0.075,0) rectangle (axis cs:0.425,4227);
\draw[fill=red,draw opacity=0] (axis cs:1.075,0) rectangle (axis cs:1.425,3295);
\draw[fill=red,draw opacity=0] (axis cs:2.075,0) rectangle (axis cs:2.425,3737);
\draw[fill=red,draw opacity=0] (axis cs:3.075,0) rectangle (axis cs:3.425,2067);
\draw[fill=red,draw opacity=0] (axis cs:4.075,0) rectangle (axis cs:4.425,1220);
\draw[fill=red,draw opacity=0] (axis cs:5.075,0) rectangle (axis cs:5.425,1750);
\draw[fill=color0,draw opacity=0] (axis cs:-0.175,0) rectangle (axis cs:0.175,6074);
\draw[fill=color0,draw opacity=0] (axis cs:0.825,0) rectangle (axis cs:1.175,1221);
\draw[fill=color0,draw opacity=0] (axis cs:1.825,0) rectangle (axis cs:2.175,1694);
\draw[fill=color0,draw opacity=0] (axis cs:2.825,0) rectangle (axis cs:3.175,841);
\draw[fill=color0,draw opacity=0] (axis cs:3.825,0) rectangle (axis cs:4.175,163);
\draw[fill=color0,draw opacity=0] (axis cs:4.825,0) rectangle (axis cs:5.175,4428);
\draw[fill=blue,draw opacity=0] (axis cs:-0.425,0) rectangle (axis cs:-0.075,6438);
\draw[fill=blue,draw opacity=0] (axis cs:0.575,0) rectangle (axis cs:0.925,1467);
\draw[fill=blue,draw opacity=0] (axis cs:1.575,0) rectangle (axis cs:1.925,1164);
\draw[fill=blue,draw opacity=0] (axis cs:2.575,0) rectangle (axis cs:2.925,258);
\draw[fill=blue,draw opacity=0] (axis cs:3.575,0) rectangle (axis cs:3.925,82);
\draw[fill=blue,draw opacity=0] (axis cs:4.575,0) rectangle (axis cs:4.925,230);
%

\path [draw=black, fill opacity=0] (axis cs:-0.7175,0)
--(axis cs:5.7175,0);

\path [draw=black, fill opacity=0] (axis cs:-0.7175,1)
--(axis cs:5.7175,1);

\node at (axis cs:-0.5,6458)[
  scale=0.5,
  anchor=base west,
  text=black,
  rotate=0.0
]{ 6438};
\node at (axis cs:0.5,1487)[
  scale=0.5,
  anchor=base west,
  text=black,
  rotate=0.0
]{ 1467};
\node at (axis cs:1.5,1184)[
  scale=0.5,
  anchor=base west,
  text=black,
  rotate=0.0
]{ 1164};
\node at (axis cs:2.5,278)[
  scale=0.5,
  anchor=base west,
  text=black,
  rotate=0.0
]{ 258};
\node at (axis cs:3.5,102)[
  scale=0.5,
  anchor=base west,
  text=black,
  rotate=0.0
]{ 82};
\node at (axis cs:4.5,250)[
  scale=0.5,
  anchor=base west,
  text=black,
  rotate=0.0
]{ 230};
\node at (axis cs:-0.1,6094)[
  scale=0.5,
  anchor=base west,
  text=black,
  rotate=0.0
]{ 6074};
\node at (axis cs:0.9,1241)[
  scale=0.5,
  anchor=base west,
  text=black,
  rotate=0.0
]{ 1221};
\node at (axis cs:1.9,1714)[
  scale=0.5,
  anchor=base west,
  text=black,
  rotate=0.0
]{ 1694};
\node at (axis cs:2.9,861)[
  scale=0.5,
  anchor=base west,
  text=black,
  rotate=0.0
]{ 841};
\node at (axis cs:3.9,183)[
  scale=0.5,
  anchor=base west,
  text=black,
  rotate=0.0
]{ 163};
\node at (axis cs:4.9,4448)[
  scale=0.5,
  anchor=base west,
  text=black,
  rotate=0.0
]{ 4428};
\node at (axis cs:0.2,4247)[
  scale=0.5,
  anchor=base west,
  text=black,
  rotate=0.0
]{ 4227};
\node at (axis cs:1.2,3315)[
  scale=0.5,
  anchor=base west,
  text=black,
  rotate=0.0
]{ 3295};
\node at (axis cs:2.2,3757)[
  scale=0.5,
  anchor=base west,
  text=black,
  rotate=0.0
]{ 3737};
\node at (axis cs:3.2,2087)[
  scale=0.5,
  anchor=base west,
  text=black,
  rotate=0.0
]{ 2067};
\node at (axis cs:4.2,1240)[
  scale=0.5,
  anchor=base west,
  text=black,
  rotate=0.0
]{ 1220};
\node at (axis cs:5.2,1770)[
  scale=0.5,
  anchor=base west,
  text=black,
  rotate=0.0
]{ 1750};
\end{axis}

\end{tikzpicture}

}{
\begin{tikzpicture}

\definecolor{color0}{rgb}{1,0.647058823529412,0}

\begin{axis}[
ylabel={number of instances},
y label style={at={(axis description cs:-0.05,.5)},anchor=south},
xmin=-0.7175, xmax=5.7175,
ymin=0, ymax=6759.9,
xtick={0.5,1.5,2.5,3.5,4.5,5.5},
xticklabels={1e-06,0.01,1,10,100,},
tick align=outside,
xticklabel style = {rotate=30},
tick pos=left,
x grid style={white!69.01960784313725!black},
y grid style={white!69.01960784313725!black}
]
\draw[fill=red,draw opacity=0] (axis cs:0.075,0) rectangle (axis cs:0.425,4227);
\draw[fill=red,draw opacity=0] (axis cs:1.075,0) rectangle (axis cs:1.425,3295);
\draw[fill=red,draw opacity=0] (axis cs:2.075,0) rectangle (axis cs:2.425,3737);
\draw[fill=red,draw opacity=0] (axis cs:3.075,0) rectangle (axis cs:3.425,2067);
\draw[fill=red,draw opacity=0] (axis cs:4.075,0) rectangle (axis cs:4.425,1220);
\draw[fill=red,draw opacity=0] (axis cs:5.075,0) rectangle (axis cs:5.425,1750);
\draw[fill=color0,draw opacity=0] (axis cs:-0.175,0) rectangle (axis cs:0.175,6074);
\draw[fill=color0,draw opacity=0] (axis cs:0.825,0) rectangle (axis cs:1.175,1221);
\draw[fill=color0,draw opacity=0] (axis cs:1.825,0) rectangle (axis cs:2.175,1694);
\draw[fill=color0,draw opacity=0] (axis cs:2.825,0) rectangle (axis cs:3.175,841);
\draw[fill=color0,draw opacity=0] (axis cs:3.825,0) rectangle (axis cs:4.175,163);
\draw[fill=color0,draw opacity=0] (axis cs:4.825,0) rectangle (axis cs:5.175,4428);
\draw[fill=blue,draw opacity=0] (axis cs:-0.425,0) rectangle (axis cs:-0.075,6438);
\draw[fill=blue,draw opacity=0] (axis cs:0.575,0) rectangle (axis cs:0.925,1467);
\draw[fill=blue,draw opacity=0] (axis cs:1.575,0) rectangle (axis cs:1.925,1164);
\draw[fill=blue,draw opacity=0] (axis cs:2.575,0) rectangle (axis cs:2.925,258);
\draw[fill=blue,draw opacity=0] (axis cs:3.575,0) rectangle (axis cs:3.925,82);
\draw[fill=blue,draw opacity=0] (axis cs:4.575,0) rectangle (axis cs:4.925,230);
%

\path [draw=black, fill opacity=0] (axis cs:-0.7175,0)
--(axis cs:5.7175,0);

\path [draw=black, fill opacity=0] (axis cs:-0.7175,1)
--(axis cs:5.7175,1);

\node at (axis cs:-0.5,6458)[
  scale=0.5,
  anchor=base west,
  text=black,
  rotate=0.0
]{ 6438};
\node at (axis cs:0.5,1487)[
  scale=0.5,
  anchor=base west,
  text=black,
  rotate=0.0
]{ 1467};
\node at (axis cs:1.5,1184)[
  scale=0.5,
  anchor=base west,
  text=black,
  rotate=0.0
]{ 1164};
\node at (axis cs:2.5,278)[
  scale=0.5,
  anchor=base west,
  text=black,
  rotate=0.0
]{ 258};
\node at (axis cs:3.5,102)[
  scale=0.5,
  anchor=base west,
  text=black,
  rotate=0.0
]{ 82};
\node at (axis cs:4.5,250)[
  scale=0.5,
  anchor=base west,
  text=black,
  rotate=0.0
]{ 230};
\node at (axis cs:-0.1,6094)[
  scale=0.5,
  anchor=base west,
  text=black,
  rotate=0.0
]{ 6074};
\node at (axis cs:0.9,1241)[
  scale=0.5,
  anchor=base west,
  text=black,
  rotate=0.0
]{ 1221};
\node at (axis cs:1.9,1714)[
  scale=0.5,
  anchor=base west,
  text=black,
  rotate=0.0
]{ 1694};
\node at (axis cs:2.9,861)[
  scale=0.5,
  anchor=base west,
  text=black,
  rotate=0.0
]{ 841};
\node at (axis cs:3.9,183)[
  scale=0.5,
  anchor=base west,
  text=black,
  rotate=0.0
]{ 163};
\node at (axis cs:4.9,4448)[
  scale=0.5,
  anchor=base west,
  text=black,
  rotate=0.0
]{ 4428};
\node at (axis cs:0.2,4247)[
  scale=0.5,
  anchor=base west,
  text=black,
  rotate=0.0
]{ 4227};
\node at (axis cs:1.2,3315)[
  scale=0.5,
  anchor=base west,
  text=black,
  rotate=0.0
]{ 3295};
\node at (axis cs:2.2,3757)[
  scale=0.5,
  anchor=base west,
  text=black,
  rotate=0.0
]{ 3737};
\node at (axis cs:3.2,2087)[
  scale=0.5,
  anchor=base west,
  text=black,
  rotate=0.0
]{ 2067};
\node at (axis cs:4.2,1240)[
  scale=0.5,
  anchor=base west,
  text=black,
  rotate=0.0
]{ 1220};
\node at (axis cs:5.2,1770)[
  scale=0.5,
  anchor=base west,
  text=black,
  rotate=0.0
]{ 1750};
\end{axis}

\end{tikzpicture}

}
\newcommand{
\begin{tikzpicture}

\definecolor{color0}{rgb}{0.12156862745098,0.466666666666667,0.705882352941177}

\begin{axis}[
ylabel={number of instances},
y label style={at={(axis description cs:-0.05,.5)},anchor=south},
xmin=-0.74, xmax=6.74,
ymin=0, ymax=4404.75,
xtick={0.5,1.5,2.5,3.5,4.5,5.5,6.5},
xticklabels={0.25,0.5,1,2,4,8,},
tick align=outside,
xticklabel style = {rotate=30},
tick pos=left,
x grid style={white!69.01960784313725!black},
y grid style={white!69.01960784313725!black}
]
\draw[fill=color0,draw opacity=0] (axis cs:-0.4,0) rectangle (axis cs:0.4,44);
\draw[fill=color0,draw opacity=0] (axis cs:0.6,0) rectangle (axis cs:1.4,345);
\draw[fill=color0,draw opacity=0] (axis cs:1.6,0) rectangle (axis cs:2.4,4195);
\draw[fill=color0,draw opacity=0] (axis cs:2.6,0) rectangle (axis cs:3.4,3972);
\draw[fill=color0,draw opacity=0] (axis cs:3.6,0) rectangle (axis cs:4.4,503);
\draw[fill=color0,draw opacity=0] (axis cs:4.6,0) rectangle (axis cs:5.4,274);
\draw[fill=color0,draw opacity=0] (axis cs:5.6,0) rectangle (axis cs:6.4,306);
\node at (axis cs:-0.3,54)[
  scale=0.5,
  anchor=base west,
  text=black,
  rotate=0.0
]{ 44};
\node at (axis cs:0.7,355)[
  scale=0.5,
  anchor=base west,
  text=black,
  rotate=0.0
]{ 345};
\node at (axis cs:1.7,4205)[
  scale=0.5,
  anchor=base west,
  text=black,
  rotate=0.0
]{ 4195};
\node at (axis cs:2.7,3982)[
  scale=0.5,
  anchor=base west,
  text=black,
  rotate=0.0
]{ 3972};
\node at (axis cs:3.7,513)[
  scale=0.5,
  anchor=base west,
  text=black,
  rotate=0.0
]{ 503};
\node at (axis cs:4.7,284)[
  scale=0.5,
  anchor=base west,
  text=black,
  rotate=0.0
]{ 274};
\node at (axis cs:5.7,316)[
  scale=0.5,
  anchor=base west,
  text=black,
  rotate=0.0
]{ 306};
\end{axis}

\end{tikzpicture}

}{
\begin{tikzpicture}

\definecolor{color0}{rgb}{0.12156862745098,0.466666666666667,0.705882352941177}

\begin{axis}[
ylabel={number of instances},
y label style={at={(axis description cs:-0.05,.5)},anchor=south},
xmin=-0.74, xmax=6.74,
ymin=0, ymax=4404.75,
xtick={0.5,1.5,2.5,3.5,4.5,5.5,6.5},
xticklabels={0.25,0.5,1,2,4,8,},
tick align=outside,
xticklabel style = {rotate=30},
tick pos=left,
x grid style={white!69.01960784313725!black},
y grid style={white!69.01960784313725!black}
]
\draw[fill=color0,draw opacity=0] (axis cs:-0.4,0) rectangle (axis cs:0.4,44);
\draw[fill=color0,draw opacity=0] (axis cs:0.6,0) rectangle (axis cs:1.4,345);
\draw[fill=color0,draw opacity=0] (axis cs:1.6,0) rectangle (axis cs:2.4,4195);
\draw[fill=color0,draw opacity=0] (axis cs:2.6,0) rectangle (axis cs:3.4,3972);
\draw[fill=color0,draw opacity=0] (axis cs:3.6,0) rectangle (axis cs:4.4,503);
\draw[fill=color0,draw opacity=0] (axis cs:4.6,0) rectangle (axis cs:5.4,274);
\draw[fill=color0,draw opacity=0] (axis cs:5.6,0) rectangle (axis cs:6.4,306);
\node at (axis cs:-0.3,54)[
  scale=0.5,
  anchor=base west,
  text=black,
  rotate=0.0
]{ 44};
\node at (axis cs:0.7,355)[
  scale=0.5,
  anchor=base west,
  text=black,
  rotate=0.0
]{ 345};
\node at (axis cs:1.7,4205)[
  scale=0.5,
  anchor=base west,
  text=black,
  rotate=0.0
]{ 4195};
\node at (axis cs:2.7,3982)[
  scale=0.5,
  anchor=base west,
  text=black,
  rotate=0.0
]{ 3972};
\node at (axis cs:3.7,513)[
  scale=0.5,
  anchor=base west,
  text=black,
  rotate=0.0
]{ 503};
\node at (axis cs:4.7,284)[
  scale=0.5,
  anchor=base west,
  text=black,
  rotate=0.0
]{ 274};
\node at (axis cs:5.7,316)[
  scale=0.5,
  anchor=base west,
  text=black,
  rotate=0.0
]{ 306};
\end{axis}

\end{tikzpicture}

}
\newcommand{\tableTraverseTime}{
\begin{tabular}{c|ccccccc}
	$n\setminus t$ & 6 & 9 & 12 & 20 & 24 & 30 & 50\\
	\hline
	2 & 2.53 & 3.33 & 4.98 & 6.79 & 7.72 & 9.41 & 14.44\\
	\hline
	3 & 1.55 & 1.93 & 2.73 & 5.27 & 7.15 & 10.18 & 22.38\\
	\hline
	4 & 2.97 & 2.71 & 3.83 & 8.96 & 11.63 & 16.05 & 37.45\\
	\hline
	7 & $\times$ & \fewInstances{0.03} & \fewInstances{2.01} & \fewInstances{50.84} & $\times$ & $\times$ & $\times$\\
	\hline
	8 & $\times$ & 0.02 & 11.46 & 37.12 & 63.72 & 111.77 & 333.31\\
	\hline
\end{tabular}}
\newcommand{\tableForkTime}{
\begin{tabular}{c|ccccccc}
	$n\setminus t$ & 6 & 9 & 12 & 20 & 24 & 30 & 50\\
	\hline
	2 & 0.22 & 0.31 & 0.42 & 0.76 & 0.98 & 1.40 & 3.33\\
	\hline
	3 & 0.21 & 0.30 & 0.44 & 1.00 & 1.39 & 2.06 & 5.45\\
	\hline
	4 & 0.18 & 0.28 & 0.42 & 1.49 & 2.15 & 3.48 & 9.48\\
	\hline
	7 & $\times$ & \fewInstances{0.16} & \fewInstances{0.31} & \fewInstances{1.48} & $\times$ & $\times$ & $\times$\\
	\hline
	8 & $\times$ & 0.06 & 0.28 & 1.01 & 5.05 & 18.39 & 95.76\\
	\hline
\end{tabular}}
\newcommand{\Myterms}{6,9,12,20,24,30,50}
\newcommand{\variables}{2,3,4,7,8}
\newcommand{\instancesBnB}{9639}
\newcommand{\totalTimeBnB}{193.19}
\newcommand{\smallGap}{9069}
\newcommand{\smallGapPercent}{94.1}
\newcommand{\traverseQuotMin}{0.056}
\newcommand{\traverseQuotMax}{172.01}
\newcommand{\traverseQuotGM}{1.141}
\newcommand{\traverseQuotPercent}{14.1}
\newcommand{\boundForkFail}{102}

\usepackage{listings}
\lstset{
	basicstyle=\ttfamily,
	columns=fullflexible,
	frame=single,
	breaklines=true,
	postbreak=\mbox{\textcolor{red}{$\hookrightarrow$}\space},
}
\setlist[description]{leftmargin=1em,labelindent=0em}

\makeatletter
\def\namedlabel#1#2{\begingroup
		#2%
		\def\@currentlabel{#2}%
		\phantomsection\label{#1}\endgroup
}
\makeatother


\author{Henning Seidler} 

\address{Henning Seidler, Technische Universit\"at Berlin, FG Security in Telecommunications, Ernst-Reuter-Platz 7, 10587 Berlin, Germany\medskip}

\email{henning.seidler@tu-berlin.de}

\subjclass[2010]{Primary: 14P99, 90-04, 90C22, 90C26; Secondary: 14Q20, 52B20, 68Q25 \textit{ACM Subject Classification:} Mathematical software performance}

\keywords{Certificate, Nonnegativity, Polynomial optimisation, Sum of nonnegative circuit polynomials, Sum of squares, Unconstrained}

\newif\ifcomment
\commentfalse
\commenttrue

\title{Improved Lower Bounds for Global Polynomial Optimisation}


\begin{document}

\pagestyle{plain}
\pagenumbering{arabic}

\begin{abstract}
	We present a branch-and-bound algorithm to improve the lower bounds obtained by SONC/SAGE.
	The running time is fixed-parameter tractable in the number of variables.
	Furthermore, we describe a new heuristic to obtain a candidate for the global minimum of a multivariate polynomial, based on its SONC decomposition.
	Applying this approach to thousands of test cases, we mostly obtain small duality gaps. 
	In particular, we optimally solve the global minimisation problem in about 70\% of the investigated cases.
\end{abstract}

\maketitle

\section{Introduction}

Finding the global minimum of a given multivariate polynomial is a well-known problem in optimisation.
This problem has countless applications, see e.g., \cite{Lasserre:Book:CPOPApplications}.
Closely connected is the decision problem, whether a given multivariate polynomial is nonnegative.
Already this problem is known to be \coNP{}-hard, as follows from \cite[Theorem 3]{Murty:Kabadi:NPC}.
Therefore, a common approach to \struc{certify nonnegativity} is to use some sufficient criterion.
The most famous approach is \struc{sums of squares (SOS)}, which dates back to Hilbert.
This approach has been widely applied with success in recent years; see e.g., \cite{Blekherman:Parrilo:Thomas,Laurent:Survey,Lasserre:Book:CPOPApplications,Lasserre:Book:CPOPIntroduction} for an overview.

However, the SOS approach has some serious drawbacks.
In 2006, Blekherman proved that for fixed even degree $d \geq 4$ and $n \to \infty$ almost every nonnegative polynomial is not SOS \cite{Blekherman:ConeComparison}.
Furthermore, deciding whether an $n$-variate polynomial of degree $d$ is SOS translates into an SDP of size $\binom{n+d}{d}$, which quickly becomes infeasible even to state, let alone be solved.
For \struc{sparse} polynomials, i.e. where the support is significantly smaller than all $\binom{n+d}{d}$ possible monomials, this is particularly critical, as it presents an exponential blow-up.
In this setting, Renegar \cite{Renegar:PSPACE} presented a more efficient algorithm which runs in polynomial space and single exponential time.
Even under the view of parametrised complexity, deciding SOS for sparse polynomial only is in \XP{} parametrised by either the degree $d$ or the number of variables $n$.

An alternative certificate of nonnegativity is based on \struc{sums of nonnegative circuit polynomials (SONC)}, introduced by Iliman and de Wolff \cite{Iliman:deWolff:Circuits}.
In a recent paper \cite{Seidler:deWolff:POEM}, we presented an algorithmic approach to obtain lower bounds via SONC, including the software POEM (Effective Methods in Polynomial Optimisation) \cite{poem:software}.
This method computes a lower bound by solving a geometric programme.
While this approach fared well in experiments, it had some major drawbacks.
\begin{enumerate}
	\item It did not include a method to find any (local) minimiser, so we could not tell the optimality gap.
	\item To find the bound via SONC, we had to perform a relaxation, that allowed to restrict on the positive orthant, which possibly worsened the results.
	\item The method could only find \emph{some} lower bound and not even the best bound theoretically obtainable via SONC.
\end{enumerate}
An important improvement on the third issue came by Chandrasekaran and Shah by using \struc{sums of arithmetic geometric mean exponentials (SAGE)}, using relative entropy programmes (REP) \cite{Chandrasekaran:Shah:SAGE,Chandrasekaran:Shah:REP}.
However, their variable substitution corresponds to a restriction on the positive orthant as well, but there they efficiently compute the best bound obtainable by SAGE.

In this contribution, we address the first two of the above issues.
First, in \cref{theorem:circuit_minimiser}, we generalise \cite{Iliman:deWolff:Circuits} to explicitly compute minimisers for arbitrary circuit polynomials.
This serves as base for our heuristic to compute the minimum of the given polynomial in \Cref{subsection:minima}.
Second, we present a branch-and-bound approach, where we branch over the signs on the variables in \Cref{section:branch_and_bound}.
This eventually gives us additional information for the sign of the terms, which allows to improve the lower bounds.
The running time may increase by a factor $2^n$, which is fixed-parameter tractable, so it is still considered efficient in parametrised complexity.
In fact, we only have to perform our initial algorithm on some of the $2^n$ orthants.
In \Cref{section:minimal_orthants} we suggest an alternative to the branch-and-bound, where we determine these orthants and compute a lower bound for each one, which can easily be done in parallel.
The time to find these orthants is negligible to the overall time.

We implemented these algorithms and included them in our software POEM.
In addition, we perform a large scale experiment on a subset of the examples from \cite{Seidler:deWolff:POEM} and present the evaluation in \Cref{section:experimental_results}.
These polynomials have up to 8 variables, degree 60 and 50 terms.
In total, our experiment contains \instancesBnB{} instances with a total running time of more than 8 days.
Overall, we observe a significant improvement of the lower bounds and about 70\% of our instances we solved optimally.

\subsection*{Acknowledgements}
We thank Timo de Wolff for his suggestion to work on a branch-and-bound approach for SONC.
Further thanks go to Helena Müller for her improved computation of minimisers of circuit polynomials.

\section{Preliminaries}
\label{section:preliminaries}

In this section we introduce our basic notation, sums of squares, sums of nonnegative circuit polynomials, and geometric programmes.

\subsection{Representing Sparse Polynomials}

Throughout the paper, we use bold letters for vectors (small) and matrices (capital), e.g., $\struc{\Vector{x}}=(x_1,\ldots,x_n) \in \mathds R^n$.
Let $\struc{\mathds R_{\geq 0}}$ and $\struc{\mathds R_{> 0}}$ denote the set of nonnegative and positive real numbers, respectively.
Furthermore, let \struc{$\mathds R[\Vector{x}] = \mathds R[x_1,\ldots,x_n]$} be the ring of real $n$-variate polynomials. 
We denote the set of all $n$-variate polynomials of degree less than or equal to $2d$ by $\struc{\mathds R[\Vector{x}]_{n,2d}}$. 
For $p \in \mathds R[\Vector{x}]$ we denote the \struc{\textit{total degree}} of $p$ by $\struc{\deg(p)}$.

We investigate \struc{\emph{sparse polynomials}} $p \in \mathds R[\Vector{x}]$ supported on a finite set $\struc{A} \subset \mathds N^n$; we write $\struc{\support{p}}$ if a clarification is necessary.
Thus, $p$ is of the form $\struc{p(\Vector{x})} = \sum_{\Vector{\alpha} \in A}^{} b_{\Vector{\alpha}}\Vector{x}^{\Vector{\alpha}}$ with $\struc{b_{\Vector{\alpha}}} \in \mathds R\setminus\{0\}$ and $\struc{\Vector{x}^{\Vector{\alpha}}} = x_1^{\alpha_1} \cdots x_n^{\alpha_n}$. 
While a multivariate polynomial may have up to $\binom{n+d}{d}$ terms, sparsity means $|A| \ll \binom{n+d}{d}$.
Unless stated differently, we follow the convention $\struc{t} = |A|$.
The support of $p$ can be expressed as an $n \times t$ matrix, which we denote by $\struc{\Matrix{A}}$, such that the $j$-th column of $\Matrix{A}$ is $\Vector{\alpha(j)}$.
Hence, $p$ is uniquely described by the pair $(\Matrix{A},\Vector{b})$, written $p = \struc{\polynomial{\Matrix{A}}{\Vector{b}}}$.

The \struc{Newton polytope} of $p$, denoted $\newton{p} = \operatorname{chull}(\support{})$, is the convex hull of all exponent vectors.
A lattice point $\Vector{\alpha}$ is called \struc{\textit{even}} if it is in $(2\mathds N)^n$ and a term $b_{\Vector{\alpha}}\Vector{x}^{\Vector{\alpha}}$ is called a \struc{\emph{monomial square}} if $b_{\Vector{\alpha}} > 0$ and $\Vector{\alpha}$ even.
We define
\begin{align*}
	\struc{\monoSquares{p}} = \left\{\Vector{\alpha} \in \support{p} : \Vector{\alpha} \in (2\mathds N)^n, b_{\Vector{\alpha}} > 0\right\}
\end{align*}
as the set of monomial squares in the support of $p$.
Moreover, we use the notation $\struc{\nonSquares{p}} = \support{p} \setminus \monoSquares{p}$ for all elements of the support of $p$, which are not a monomial square. 

\subsection{Sums of Nonnegative Circuit Polynomials}

We introduce the fundamental facts of SONC polynomials, which we use in this article. 
SONCs are constructed by \textit{circuit polynomials}; which were first introduced in \cite{Iliman:deWolff:Circuits}:

\begin{definition}
	\label{definition:CircuitPolynomial}
	A \struc{\emph{circuit polynomial}} $p = \polynomial{\Matrix{A}}{\Vector{b}} \in \mathds R[\Vector{x}]$ is of the form
	\begin{align}
		\struc{p(\Vector{x})} & = \sum_{j=0}^r b_{\Vector{\alpha}(j)} \Vector{x}^{\Vector{\alpha}(j)} + b_{\Vector{\beta}} \Vector{x}^{\Vector{\beta}}, 
		\label{Equ:CircuitPolynomial}
	\end{align}
	with $0 \leq \struc{r} \leq n$, coefficients $\struc{b_{\Vector{\alpha}(j)}} \in \mathds R_{> 0}$, $\struc{b_{\Vector{\beta}}} \in \mathds R$, exponents $\struc{\Vector{\alpha}(j)} \in (2\mathds Z)^n$, $\struc{\Vector{\beta}} \in \mathds Z^n$, such that the following condition holds:
	There exist unique, positive \struc{\emph{barycentric coordinates} $\lambda_j$} relative to the $\Vector{\alpha}(j)$ with $j=0,\ldots,r$ satisfying
	\begin{align}
		\label{equ:BarycentricCoordinates}
		\Vector{\beta} &= \sum_{j=0}^r \lambda_j \Vector{\alpha}(j) \ \text{ with } \ \lambda_j > 0 \ \text{ and } \ \sum_{j=0}^r \lambda_j = 1.
	\end{align}
	For every circuit polynomial $p$ we define the corresponding \struc{\textit{circuit number}} as
	\begin{align*}
		\struc{\Theta_p} &= \prod_{j = 0}^r \left(\frac{b_{\Vector{\alpha}(j)}}{\lambda_j}\right)^{\lambda_j}. 
		\qedhere
	\end{align*}
\end{definition}
	Condition \eqref{equ:BarycentricCoordinates} implies that $\Matrix{A}(p)$ forms a minimal affine dependent set. 
	Those sets are called \struc{\textit{circuits}}, see e.g., \cite{Oxley:MatroidTheory}.
	More specifically, Condition \eqref{equ:BarycentricCoordinates} yields that $\newton{p}$ is a simplex with even vertices $\Vector{\alpha}(0), \Vector{\alpha}(1),\ldots,\Vector{\alpha}(r)$ and that the exponent $\Vector{\beta}$ is in the relative interior of $\newton{p}$. 
	Therefore, we call the terms $p_{\Vector{\alpha}(0)} \Vector{x}^{\Vector{\alpha}(0)},\ldots,p_{\Vector{\alpha}(r)} \Vector{x}^{\Vector{\alpha}(r)}$ the \struc{\emph{outer terms}} and $p_{\Vector{\beta}} \Vector{x}^{\Vector{\beta}}$ the \struc{\emph{inner term}} of $p$. 

Circuit polynomials are proper building blocks for nonnegativity certificates since the circuit number alone determines whether they are nonnegative.

\begin{theorem}[\cite{Iliman:deWolff:Circuits}, Theorem 3.8]
	Let $p$ be a circuit polynomial of the form \eqref{Equ:CircuitPolynomial}. Then $p$ is nonnegative if and only if:
	\begin{enumerate}
		\item $p$ is a sum of monomial squares, or
		\item the coefficient $b_{\Vector{\beta}}$ of the inner term of $p$ satisfies $|b_{\Vector{\beta}}| \leq \Theta_p$.
	\end{enumerate}
	\label{thm:CircuitPolynomialNonnegativity}
\end{theorem}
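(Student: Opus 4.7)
The plan is to apply the weighted arithmetic--geometric-mean (AM-GM) inequality, separating the sum-of-monomial-squares case from the generic one.

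If $p$ is a sum of monomial squares then the inner term is itself a monomial square, so $p\ge 0$ term by term; this handles the first alternative. Otherwise I would exploit that every outer exponent $\Vector{\alpha}(j)$ is even, so
\[
y_j(\Vector{x}) \;:=\; (b_{\Vector{\alpha}(j)}/\lambda_j)\,\Vector{x}^{\Vector{\alpha}(j)} \;\geq\; 0
\]
on all of $\mathds R^n$. Weighted AM-GM with weights $\lambda_0,\ldots,\lambda_r$, combined with the barycentric identity \eqref{equ:BarycentricCoordinates}, should then give
\begin{align*}
\sum_{j=0}^r b_{\Vector{\alpha}(j)}\,\Vector{x}^{\Vector{\alpha}(j)}
\;=\; \sum_{j=0}^r \lambda_j\,y_j(\Vector{x})
\;\geq\; \prod_{j=0}^r y_j(\Vector{x})^{\lambda_j}
\;=\; \Theta_p\,|\Vector{x}^{\Vector{\beta}}|,
\end{align*}
and hence $p(\Vector{x}) \ge (\Theta_p - |b_{\Vector{\beta}}|)\,|\Vector{x}^{\Vector{\beta}}| \ge 0$ whenever $|b_{\Vector{\beta}}|\le \Theta_p$. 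This yields the sufficient direction in case~(2).

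For the converse I would produce a witness $\Vector{x}^*$ at which the AM-GM estimate is tight \emph{and} at which $b_{\Vector{\beta}}\Vector{x}^{*\Vector{\beta}}$ is maximally negative. Tightness forces $y_0(\Vector{x}^*)=\cdots=y_r(\Vector{x}^*)=c$ for some common value $c>0$; after taking logarithms this becomes a linear system $\Vector{\alpha}(j)\cdot\Vector{u} = \log(c\lambda_j/b_{\Vector{\alpha}(j)})$ in the unknown $\Vector{u}=(\log|x_1^*|,\ldots,\log|x_n^*|)$. Because $\Matrix{A}(p)$ is a circuit, the differences $\Vector{\alpha}(j)-\Vector{\alpha}(0)$ are linearly independent, so the reduced system is solvable, and a single free parameter $c$ absorbs the remaining compatibility equation. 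Since every outer exponent is even, flipping signs of coordinates does not affect any $y_j$ but can freely prescribe the sign of $\Vector{x}^{*\Vector{\beta}}$ whenever $\Vector{\beta}$ has at least one odd entry; if instead $\Vector{\beta}$ is even, the hypothesis that $p$ is \emph{not} a sum of monomial squares forces $b_{\Vector{\beta}}<0$ and $\Vector{x}^{*\Vector{\beta}}>0$ automatically. Evaluating at such an $\Vector{x}^*$ yields $p(\Vector{x}^*) = (\Theta_p - |b_{\Vector{\beta}}|)\,|\Vector{x}^{*\Vector{\beta}}|$, and since $|\Vector{x}^{*\Vector{\beta}}|>0$ the nonnegativity of $p$ implies $|b_{\Vector{\beta}}|\le \Theta_p$.

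The hard part is precisely the construction of the equaliser $\Vector{x}^*$: one must check that a real solution to the log-linear system exists and that its sign pattern can be arranged to make $b_{\Vector{\beta}}\Vector{x}^{*\Vector{\beta}}$ negative. Both points reduce to a short linear-algebra argument using the minimal affine dependence defining the circuit, together with the coordinate-wise sign-flip observation above.
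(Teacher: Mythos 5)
The paper does not prove this statement at all: it is quoted verbatim from \cite{Iliman:deWolff:Circuits} (Theorem 3.8) and used as a black box, so there is no in-paper proof to compare against. Your sketch is nevertheless the standard argument for this result and it is sound: the weighted AM--GM step correctly yields $\sum_{j=0}^r b_{\Vector{\alpha}(j)}\Vector{x}^{\Vector{\alpha}(j)} \geq \Theta_p\,|\Vector{x}^{\Vector{\beta}}|$, and for the converse the equality-case witness exists because the reduced log-linear system $\langle \Vector{\alpha}(j)-\Vector{\alpha}(0),\Vector{u}\rangle = \log\bigl(\lambda_j b_{\Vector{\alpha}(0)}/(\lambda_0 b_{\Vector{\alpha}(j)})\bigr)$ is free of $c$ and solvable by affine independence of the simplex vertices, after which $c$ is fixed by the remaining equation; your case split on the parity of $\Vector{\beta}$ (sign flips when some $\beta_i$ is odd, versus $b_{\Vector{\beta}}<0$ forced when $\Vector{\beta}$ is even and $p$ is not a sum of monomial squares) covers exactly the situations needed to make the inner term maximally negative at a point where $|\Vector{x}^{*\Vector{\beta}}|>0$. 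The only detail worth writing out in a full proof is that at the equaliser $p(\Vector{x}^*) = (\Theta_p - |b_{\Vector{\beta}}|)\,|\Vector{x}^{*\Vector{\beta}}|$ indeed holds with equality rather than inequality, which follows since all $y_j(\Vector{x}^*)$ share the common value $c = \Theta_p\,|\Vector{x}^{*\Vector{\beta}}|$.
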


To compute $\Theta_p$, we solve a system of linear equations.
Hence, we have an easily checked arithmetic condition for the nonnegativity of a circuit polynomial.
These nonnegative circuit polynomials now generate the cone, we use as our certificate of nonnegativity.
\begin{definition}
	\label{definition:SONC}
	We define for every $n,d \in \mathds N$ the set of \struc{\emph{sums of nonnegative circuit polynomials} (SONC)} in $n$ variables of degree $2d$ as
	\begin{align*}
		\struc{C_{n,2d}} = \left\{f \in \mathds R[\Vector{x}]_{n,2d} : f = \sum_{\text{finite}} p_i, \quad p_i \text{ is a nonnegative circuit polynomial} \right\}.
		\tag*{\qedhere}
	\end{align*}
\end{definition}

We denote by \SONC{} both the set of SONC polynomials and the property of a polynomial to be a sum of nonnegative circuit polynomials.

For further details about the SONC cone see \cite{deWolff:Circuits:OWR,Iliman:deWolff:Circuits, Dressler:Iliman:deWolff:Positivstellensatz}.

\subsection{Lower Bounds via SONC}
\label{subsection:SONC_algo}

Given an arbitrary polynomial $p$, we apply this approach to compute a lower bound for its values.
If we find some $\boundSONC \in \mathds R$ such that $p-\boundSONC \in \SONC$, then we have $p(x) \geq \boundSONC$ for all $x\in\mathds R^n$.
Note, that in general, this is not the infimum.

We shortly describe our algorithm from \cite{Seidler:deWolff:POEM}.
Further details can be found there.

Every monomial, that is not a square, must appear as the inner term of a circuit polynomial.
This corresponds to relaxing $p$ to the polynomial $\relaxPoly{p}$, where every non-square is equipped with a negative sign.
Furthermore, we now can restrict ourselves to the positive orthant, since $\relaxPoly{p}$ attains its minimum there.
For simplicity, we assume $p = \relaxPoly{p}$ when computing lower bounds for a polynomial.

Next, we determine the circuits involved in the decomposition.
For each $\Vector{\beta} \in \nonSquares{p}$, we write it as a convex combination of monomial squares, which means we find a solution of the LP
\begin{align*}
	\sum_{\Vector{\alpha} \in \monoSquares{p}} \lambda_{\Vector{\alpha}}\Vector{\alpha} &= \Vector{\beta} &
	\sum_{\Vector{\alpha} \in \monoSquares{p}} \lambda_{\Vector{\alpha}} &= 1 &
	\lambda_{\Vector{\alpha}} &\geq 0 \text{ for all $\Vector{\alpha} \in \monoSquares{p}$}
\end{align*}
If necessary, we further eliminate some if the $\lambda_{\Vector{\alpha}}$ until $\cover^{\Vector{\beta}} := \left\{\Vector{\alpha} : \lambda_{\Vector{\alpha}} > 0\right\}\cup\{\Vector{\beta}\}$ forms a circuit.
This yields the \struc{covering} $\cover := \left\{\cover^{\Vector{\beta}} : \Vector{\beta} \in \nonSquares{p}\right\}$.
Finally, we solve the following \struc{Geometric Programme}:
\begin{maxi}
	{\Matrix{X}}
	{b_{\Vector{0}} - \sum_{\Vector{\beta}\in\nonSquares{p}} X_{\Vector{\beta}, \Vector{0}}}
	{\label{problem:SONC}\tag{SONC}}
	{\boundSONC \ = \ }
	\myCons{\sum_{\Vector{\beta}\in\nonSquares{p}} X_{\Vector{\beta},\Vector{\alpha}}}{\leq b_{\Vector{\alpha}}}{\Vector{\alpha} \in \monoSquares{p}, \Vector{\alpha} \neq \Vector{0}}
	\myCons{\prod_{\Vector{\alpha}\in\cover^{\Vector{\beta}}} \left(\frac{X_{\Vector{\beta},\Vector{\alpha}}}{\lambda^{\Vector{\beta}}_{\Vector{\alpha}}}\right)^{\lambda^{\Vector{\beta}}_{\Vector{\alpha}}}}{= |b_{\Vector{\beta}}|}{\Vector{\beta} \in \nonSquares{p}}
	\myCons{X_{\Vector{\beta}, \Vector{\alpha}}}{\geq 0}{\Vector{\alpha} \in \monoSquares{p}, \Vector{\beta} \in \nonSquares{p}}.
\end{maxi}
Then we obtain our lower bound $\boundSONC$.

For simplicity, we restricted to the case, where every non-square occurs in exactly one circuit.
Therefore, the size of this covering is bounded by $|\cover|\in\mathcal O(t)$.
If some $\Vector{\beta}$ occurs in multiple circuits, the coefficient $b_{\Vector{\beta}}$ has to be distributed among them and the second set of constraints has to be adjusted accordingly.
In the original approach, we checked for every circuit, which other exponents of non-squares are included in its Newton polytope and added these circuit as well.
This extended the size of the covering to $|\cover| \in \mathcal O(t^2)$.
In either case, the size of \cref{problem:SONC} is polynomially bounded in the input size.

\subsection{Computing Minima}
\label{subsection:minima}

While the previous section describes an algorithm to compute lower bounds for multivariate polynomials via SONC, we do not have upper bounds for the minimum, so we do not have any guarantees for the quality of our bounds.
While every local minimum gives such a bound, we can use the SONC decomposition for a heuristic to find a good (local) minimiser.
Müller investigated this idea in further detail in \cite{Mueller:SONC_minima} and in this section, we present the main ideas of her work.
In short, given a SONC decomposition, we explicitly compute the minimiser of each circuit polynomial.
Then we take the barycentre of these minimisers and use it as starting point for some local minimisation method.

Müller's experimental results were already promising.
We re-implemented the approach and include it in our software POEM.
See \cref{section:experimental_results} for the experimental results.

Generalising work from Iliman and de Wolff \cite{Iliman:deWolff:Circuits}, Müller shows the following theorem.
For the reader's convenience, we also provide the proof here.
\begin{theorem}[{\cite[Theorem 2.5]{Mueller:SONC_minima}}]
	\label{theorem:circuit_minimiser}
	For a circuit polynomial $p$ with $\alpha(0) = 0$ and $b_{\Vector{\beta}}<0$, let $\Vector{s}$ be the vector satisfying the linear equation system
	\begin{align}
		\langle \Vector{s}, \Vector{\alpha}(j) - \Vector {\beta}\rangle &= \log\left(- \frac{\lambda_j}{b_{\Vector{\alpha}(j)}} \cdot b_{\Vector{\beta}}\right)
		&\text{for all $1\leq j\leq n$.}
		\label{equation:condition_min}
	\end{align}
	Then $e^{\Vector{s}}$ is the global minimiser of $p$.
\end{theorem}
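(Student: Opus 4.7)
The plan is to switch to logarithmic coordinates on the positive orthant, identify \eqref{equation:condition_min} with the vanishing-gradient equations of the resulting objective, and use uniqueness of the critical point to conclude. Since each outer exponent $\Vector{\alpha}(j)$ is even, replacing $\Vector{x}$ by $|\Vector{x}|$ leaves the outer terms unchanged, and since $b_{\Vector{\beta}} < 0$ with $|\Vector{x}|^{\Vector{\beta}} = |\Vector{x}^{\Vector{\beta}}| \geq \Vector{x}^{\Vector{\beta}}$ the inner term can only decrease. Hence the infimum of $p$ is attained on the closed positive orthant; writing $\Vector{x} = e^{\Vector{s}}$ for $\Vector{s} \in \mathds R^n$, it suffices to minimise
\[
	f(\Vector{s}) \;=\; b_{\Vector{\alpha}(0)} \;+\; \sum_{j=1}^r b_{\Vector{\alpha}(j)}\,e^{\langle \Vector{\alpha}(j),\Vector{s}\rangle} \;-\; |b_{\Vector{\beta}}|\,e^{\langle \Vector{\beta},\Vector{s}\rangle}.
\]

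Next I would compute the gradient. Using $\Vector{\beta} = \sum_{k=1}^r \lambda_k \Vector{\alpha}(k)$ (valid because $\Vector{\alpha}(0) = \Vector{0}$),
\[
	\nabla f(\Vector{s}) \;=\; \sum_{j=1}^r \Vector{\alpha}(j)\bigl(b_{\Vector{\alpha}(j)} e^{\langle \Vector{\alpha}(j),\Vector{s}\rangle} - \lambda_j |b_{\Vector{\beta}}|\,e^{\langle \Vector{\beta},\Vector{s}\rangle}\bigr).
\]
The vectors $\Vector{\alpha}(1), \ldots, \Vector{\alpha}(r)$ are linearly independent (the simplex is $r$-dimensional with a vertex at the origin), so $\nabla f = 0$ forces each scalar coefficient to vanish, and taking logarithms recovers exactly \eqref{equation:condition_min}. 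A short check using $\sum_{j=0}^r \lambda_j = 1$ and $\lambda_0 > 0$ shows that $\{\Vector{\alpha}(j) - \Vector{\beta}\}_{j=1}^r$ is linearly independent as well, so in the non-degenerate case $r = n$ the linear system admits a unique solution $\Vector{s}^*$.

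Finally, I would upgrade this unique critical point to a global minimum. A direct computation of $\nabla^2 f(\Vector{s}^*)$ shows it equals $|b_{\Vector{\beta}}| e^{\langle \Vector{\beta},\Vector{s}^*\rangle}\bigl(\sum_{j=0}^r \lambda_j \Vector{\alpha}(j)\Vector{\alpha}(j)^\top - \Vector{\beta}\Vector{\beta}^\top\bigr)$, i.e.\ a positive scalar times the covariance of the random vector taking value $\Vector{\alpha}(j)$ with probability $\lambda_j$; this matrix is positive definite whenever $r = n$, so $\Vector{s}^*$ is a strict local minimum with value $b_{\Vector{\alpha}(0)} - \lambda_0 |b_{\Vector{\beta}}| e^{\langle \Vector{\beta}, \Vector{s}^*\rangle} < b_{\Vector{\alpha}(0)}$. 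A quick growth analysis --- $f \to +\infty$ along any direction $\Vector{v}$ with $\max_j \langle \Vector{\alpha}(j),\Vector{v}\rangle > 0$, while along the remaining directions $f$ tends to a limit $\geq b_{\Vector{\alpha}(0)}$ --- shows the infimum is attained, and by uniqueness of the critical point it must be attained at $\Vector{s}^*$. Hence $e^{\Vector{s}^*}$ globally minimises $p$. The main obstacle I anticipate is that $f$ is genuinely non-convex, being a difference of a convex exponential sum and a concave exponential, so one cannot invoke convexity directly; the proof must combine the Hessian positivity with the growth analysis at infinity, both of which ultimately rest on the affine independence of the simplex vertices. A secondary subtlety is the degenerate case $r < n$, where $f$ is constant along the orthogonal complement of the simplex's linear span and the uniqueness statement has to be interpreted modulo this symmetry.
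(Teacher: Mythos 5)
Your critical-point computation is the same as the paper's: the paper evaluates the shifted partial derivatives $x_j\,\partial p/\partial x_j$ at $e^{\Vector{s}}$ and shows they vanish using $\Vector{\beta}=\sum_k\lambda_k\Vector{\alpha}(k)$, which is exactly your gradient $\nabla f$ in logarithmic coordinates. Where you genuinely diverge is in upgrading the critical point to a global minimiser. The paper does this in one line by citing \cite[Proposition~3.3]{Iliman:deWolff:Circuits} for uniqueness of the minimum on the positive orthant, together with the observation that $b_{\Vector{\beta}}<0$ forces the global minimum into that orthant (your $p(\Vector{x})\geq p(|\Vector{x}|)$ argument, which the paper states in the same spirit). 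You instead give a self-contained second-order and coercivity argument: the Hessian at the critical point is a positive multiple of the covariance matrix of the distribution putting mass $\lambda_j$ on $\Vector{\alpha}(j)$, which is positive definite by affine independence of the simplex vertices, and the critical value $b_{\Vector{\alpha}(0)}-\lambda_0|b_{\Vector{\beta}}|e^{\langle\Vector{\beta},\Vector{s}^*\rangle}$ lies strictly below every limit of $f$ at infinity, so the attained infimum must sit at the unique critical point. Both routes are sound; the paper's is shorter but leans on an external result, while yours makes the global-minimality argument explicit and also cleanly isolates the degenerate case $r<n$ (where uniqueness only holds modulo the orthogonal complement of the simplex's span), which the theorem's hypothesis $1\leq j\leq n$ implicitly excludes. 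The covariance identity for the Hessian is a nice touch not present in the paper.
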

\begin{proof}
	Condition \cref{equation:condition_min} implies $e^{\langle\Vector{s},\Vector{\alpha}(j)\rangle} = -\frac{\lambda_j}{b_{\Vector{\alpha}(j)}} b_{\Vector{\beta}} \cdot e^{\langle\Vector{s},\Vector{\beta}\rangle}$.
	Evaluating the shifted partial derivative at $e^{\Vector{s}}$ yields
	\begin{align*}
		\left(x_j\frac{\partial p}{\partial x_j}\right)\left(e^{\Vector{s}}\right)
		&= \sum_{k=1}^n b_{\Vector{\alpha}(k)} \Vector{\alpha}(k)_j e^{\langle \Vector{s}, \Vector{\alpha}(k)\rangle} + b_{\Vector{\beta}} \beta_j e^{\langle\Vector{s},\Vector{\beta}\rangle}
		= -\sum_{k=1}^n \lambda_k b_{\Vector{\beta}} \Vector{\alpha}(k)_j e^{\langle \Vector{s}, \Vector{\beta}\rangle} + b_{\Vector{\beta}} \beta_j e^{\langle\Vector{s},\Vector{\beta}\rangle} \\
		&= b_{\Vector{\beta}} e^{\langle \Vector{s}, \Vector{\beta}\rangle} \left(-\sum_{k=1}^n \lambda_k \Vector{\alpha}(k)_j + \beta_j\right)
		= 0
	\end{align*}
	Note that the 0-th summand vanished, since $\Vector{\alpha}(0)=\Vector{0}$ and the final sum vanishes, because
	\begin{align*}
		\Vector{\beta} = \sum_{k=0}^n \lambda_k \Vector{\alpha}(k) = \sum_{k=1}^n \lambda_k \Vector{\alpha}(k)
	\end{align*}
	Hence, $e^{\Vector{s}}$ is a local minimiser of $p$. 
	By \cite[Proposition 3.3]{Iliman:deWolff:Circuits}, it is the unique minimum in the positive orthant and since $b_{\Vector{\beta}}<0$, the global minimum is attained in the positive orthant.
\end{proof}
Since $\Vector{\beta}$ lies in the interior of the Newton polytope, the vectors $\Vector{\beta}, \Vector{\alpha}(1), \ldots, \Vector{\alpha}(n)$ span a simplex as well.
Hence, the vectors $\Vector{\alpha}(j)-\Vector{\beta}$ are linearly independent.
Therefore, \cref{equation:condition_min} has a unique solution, so it is justified to speak of \emph{the} solution $\Vector{s}$.
For $b_{\Vector{\beta}}>0$, the inner term either is a monomial square, or has an odd power.
If $b_{\Vector{\beta}}\Vector{x}^{\Vector{\beta}}$ is a monomial square, the minimiser trivially is $\Vector{0}$.
Otherwise, let $\beta_i$ be odd and put $\hat{p} = p(x_1,\ldots,-x_i,\ldots,x_n)$.
Then $\hat{p}$ satisfies the conditions of \cref{theorem:circuit_minimiser} and has the same infimum.

Now let $\relaxPoly{p} = p_1+\ldots+p_{\coverLength}$ be the SONC decomposition of the relaxation of $p$.
Let $\minimum_i = e^{\Vector{s}_i}$ be the respective minima of the $p_i$ according to \cref{theorem:circuit_minimiser}.
Then we use the barycentre $\overline{\minimum} := \frac{1}{\coverLength}\sum_{i=1}^{\coverLength} \minimum_i$ as starting point for a gradient method to find a local minimum of $\relaxPoly{p}$, which we denote $\minimum$.
The expectation is, that $\overline{\minimum}$ often lies sufficiently close to the global minimum of $\relaxPoly{p}$.
In these cases, this local minimum will also be the global minimum of $\relaxPoly{p}$.
If $p \neq \relaxPoly{p}$, we now call a gradient method on $p$ with $\minimum$ as starting point, to obtain our final result.
\newcommand{\gradientDescent}[1]{\operatorname{LocalMin}\left(#1\right)}
\begin{algorithm}
	The algorithm to compute (local) minima via SONC in polynomial time works as follows.
	For given accuracy $\varepsilon$, the running time is polynomial in the input size and $\frac{1}{\varepsilon}$.
	\begin{algorithmic}[1]
		\Require $p$ -- Polynomial
		\Ensure $\minimum$ -- Local minimum of $p$
		\Function{SONC-Min}{p}
			\State compute SONC-decomposition $\relaxPoly{p} = p_1+\ldots+p_{\coverLength} + \variableStyle{const}$
			\For{$i=1,\ldots,\coverLength$}
				\State set $\Vector{s}_i$ as solution of \cref{equation:condition_min} for $p_i$
				\State $\minimum_i \gets e^{\Vector{s}_i}$ \Comment{minimum of circuit-polynomial $p_i$}
			\EndFor
			\State $\minimum \gets \frac{1}{\coverLength}\sum_{i=1}^{\coverLength} \minimum_i$
			\State $\minimum \gets \gradientDescent{\relaxPoly{p}, \minimum}$ \Comment{local minimum of $\relaxPoly{p}$}
			\State \Return $\minimum \gets \gradientDescent{p, \minimum}$ \Comment{local minimum of $p$}
				\label{line:min_p}
		\EndFunction
	\end{algorithmic}
\label{algorithm:min_polynomial}
\end{algorithm}
\begin{proof}
	As observed in \cref{subsection:SONC_algo}, we have $\coverLength \in\mathcal O(t^2)$, 
	For each minimiser, we have to solve a linear equation system, which can be done in $\mathcal O(n^3)$.
	For the local minimum, we can use nonlinear gradient descent, which has quadratic convergence \cite{Fletcher:Reeves:conjugate_gradient}.
	Hence, we have an overall polynomial running time.
\end{proof}

\subsection{SAGE Polynomials}
\label{subsection:background_sage}

In \cite{Chandrasekaran:Shah:SAGE} Chandrasekaran and Shah introduce another certificate for nonnegativity, based on ``sums of arithmetic-geometric-mean exponentials'' (SAGE).
They also form a class of sparse polynomials, whose nonnegativity can also easily be verified.
Both computing this certificate and decomposing a polynomial as SAGE (if possible) can be done by a \struc{relative entropy programme}.
Like for SONC, the support of the certificate of nonnegativity is exactly the support of the input polynomial, which also makes this approach well-suited, to obtain lower bounds for sparse polynomials.
In fact, both approaches describe the same set of polynomials \cite{Murray:Chandrasekaran:Wierman:SONC_is_SAGE}.

A \struc{signomial} is an expression of the form
\begin{align*}
	p = \sum_{j=1}^t b_j \cdot \exp\left(\langle\Vector{\alpha}(j), \Vector{x}\rangle\right)
\end{align*}
with $b_j\in\mathds R$ and $\Vector{\alpha}(j)\in\mathds N^n$.
Via logarithmic transformation, signomials correspond to polynomials, whose domain is restricted to $\mathds R_+^n$.

An \struc{\emph{arithmetic-geometric-mean-exponential (AGE)}} is a nonnegative signomial with at most one negative coefficient.
The name comes from the fact that its nonnegativity can be verified via arithmetic-geometric-mean inequality.
The \emph{sums of AGE polynomials (SAGE)} form a convex cone.
Testing membership in this cone can be done by solving a \struc{relative entropy programme (REP)}, which is a type of convex optimisation problem.

The \struc{\emph{relative entropy}} function is defined for $\Vector{\lambda}, \Vector{v} \in \mathds R_+^t$ by $\struc{D(\Vector{\lambda},\Vector{v})} := \sum_{j=1}^t \lambda_j \log \frac{\lambda_j}{v_j}$. 
Furthermore, let $\Vector{v}_{\setminus i}\in\mathds R^{n-1}$ denote the vector derived from $\Vector{v}\in\mathds R^{n}$, where the entry at index $i$ was removed and for a matrix $\Matrix{X}$ let $\Vector{X}^{(j)}$
Then from \cite[Proposition~2.4]{Chandrasekaran:Shah:SAGE}, we have the following characterisation.
\begin{theorem}
	\label{theorem:SAGE_characterisation}
	A signomial $p=\sum_{j=1}^t b_j \exp \, (\langle \Vector{\alpha}(j), \Vector{x}\rangle)$ lies in SAGE if and only if there are $\Matrix{X}$, $\Matrix{\lambda} \in \mathds R^{t\times t}$ satisfying the following conditions:
	\begin{align}
		\label{problem:SAGE-feas}
		\begin{aligned}
			\sum_{i=1}^t \Vector{X}^{(i)} = \Vector{b} \,, \quad
			\sum_{j=1}^t \Vector{\alpha}(j) \Vector{\lambda}_j^{(i)} = \Vector{0} \,, \quad
			-\Vector{1} \cdot \Vector{\lambda}_{\setminus i}^{(i)} = \lambda_i^{(i)} \,, \\
			\Vector{X}_{\setminus i}^{(i)},	\Vector{\lambda}_{\setminus i}^{(i)} \geq \Vector{0} \,, \quad
			D\left(\Vector{\lambda}_{\setminus i}^{(i)}, e\Vector{X}_{\setminus i}^{(i)}\right) \leq X_i^{(i)} 
		 \,, \quad i=1,\dots,t \,.
		\end{aligned}
		\tag{SAGE-feas}
	\end{align}
\end{theorem}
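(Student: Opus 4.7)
The plan is to unfold the definition of SAGE and convert nonnegativity of each AGE summand into a relative entropy constraint; this is essentially the argument of \cite[Proposition~2.4]{Chandrasekaran:Shah:SAGE}.

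First I would set up the decomposition. By definition, $p$ lies in SAGE if and only if there exist AGE signomials $p^{(1)},\ldots,p^{(t)}$ summing to $p$, where the $i$-th AGE has at most the $i$-th term negative. Writing $p^{(i)}(\Vector{x})=\sum_{j=1}^t X_j^{(i)}\exp(\langle\Vector{\alpha}(j),\Vector{x}\rangle)$ with $X_j^{(i)}\geq 0$ for $j\neq i$, the additive decomposition is captured by $\sum_{i}\Vector{X}^{(i)}=\Vector{b}$, which gives the first equation of \eqref{problem:SAGE-feas}. For indices $i$ where $X_i^{(i)}\geq 0$ the AGE $p^{(i)}$ is a sum of nonnegative exponentials and the remaining constraints are trivially satisfiable (take $\Vector{\lambda}^{(i)}=\Vector{0}$), so the interesting case is $X_i^{(i)}<0$.

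Second I would prove the core AGE lemma: for a single signomial $q(\Vector{x})=X_i\exp(\langle\Vector{\alpha}(i),\Vector{x}\rangle)+\sum_{j\neq i}X_j\exp(\langle\Vector{\alpha}(j),\Vector{x}\rangle)$ with $X_j\geq 0$ for $j\neq i$ and $X_i\leq 0$, nonnegativity is equivalent to the existence of $\Vector{\lambda}_{\setminus i}\geq 0$ with $\sum_{j\neq i}\lambda_j\Vector{\alpha}(j)=-\lambda_i\Vector{\alpha}(i)$, $\sum_{j\neq i}\lambda_j=-\lambda_i$, and $D(\Vector{\lambda}_{\setminus i},e\Vector{X}_{\setminus i})\leq X_i$. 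The forward direction uses a weighted AM--GM: after normalising $\Vector{\mu}=\Vector{\lambda}_{\setminus i}/(-\lambda_i)$ one has $\sum_{j\neq i}\mu_j\Vector{\alpha}(j)=\Vector{\alpha}(i)$ and hence
\begin{align*}
\sum_{j\neq i}X_je^{\langle\Vector{\alpha}(j),\Vector{x}\rangle}
\geq \prod_{j\neq i}\left(\tfrac{X_j}{\mu_j}\right)^{\mu_j} e^{\langle\Vector{\alpha}(i),\Vector{x}\rangle},
\end{align*}
and rearranging shows nonnegativity of $q$ is equivalent to the logarithm of the product bounding $|X_i|$ from above, which rewrites as the relative-entropy inequality. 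The converse direction is the main obstacle: I would take the Lagrangian dual of $\inf_{\Vector{x}} q(\Vector{x})$, whose strong duality requires a Slater-type argument on a conic reformulation of AGE nonnegativity; the dual variables are exactly the $\Vector{\lambda}_{\setminus i}$, and the three equality constraints arise as stationarity and homogeneity conditions of the Lagrangian.

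Third I would apply this lemma to each summand $p^{(i)}$ with $X_j^{(i)}$ in place of $X_j$ and $\lambda_j^{(i)}$ in place of $\lambda_j$. The barycentric identity for the AGE $p^{(i)}$ then reads $\sum_{j}\lambda_j^{(i)}\Vector{\alpha}(j)=\Vector{0}$ together with $-\Vector{1}\cdot\Vector{\lambda}_{\setminus i}^{(i)}=\lambda_i^{(i)}$, matching the second and third equalities of \eqref{problem:SAGE-feas}, and the entropy bound becomes $D(\Vector{\lambda}_{\setminus i}^{(i)},e\Vector{X}_{\setminus i}^{(i)})\leq X_i^{(i)}$. Conjoining these conditions over $i=1,\ldots,t$ yields exactly the system \eqref{problem:SAGE-feas}, completing both directions. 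The hard part is the duality/Slater step that produces the relative entropy certificate from nonnegativity; everything else is bookkeeping of the AGE decomposition and an application of weighted AM--GM.
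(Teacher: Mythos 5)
The paper does not actually prove this theorem: it is imported verbatim as \cite[Proposition~2.4]{Chandrasekaran:Shah:SAGE}, so there is no in-paper argument to compare against. Your sketch is a faithful reconstruction of the cited proof and is correct in outline: decompose $p$ into $t$ AGE summands $\Vector{X}^{(i)}$ (first equality), reduce to the single-AGE lemma, use weighted AM--GM for sufficiency and duality for necessity. Two points deserve to be made explicit if this were written out. First, the passage from the AM--GM bound $\prod_{j\neq i}(X_j/\mu_j)^{\mu_j}\geq -X_i$ to the constraint $D(\Vector{\lambda}_{\setminus i},e\Vector{X}_{\setminus i})\leq X_i$ is not a mere ``rewriting'': it uses the identity $\inf_{t>0}D(t\Vector{\mu},e\Vector{X}_{\setminus i})=-\prod_{j\neq i}(X_j/\mu_j)^{\mu_j}$ for normalised $\Vector{\mu}$, i.e.\ the free scale of $\Vector{\lambda}^{(i)}$ left open by the homogeneous equality constraints is exactly what absorbs the optimal normalisation; without this observation the equivalence of the two inequalities is not obvious. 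Second, the converse is indeed the substantive step, and your plan (strong duality for $\inf_{\Vector{x}}\sum_{j\neq i}X_j e^{\langle\Vector{\alpha}(j)-\Vector{\alpha}(i),\Vector{x}\rangle}$, whose Lagrange/Fenchel dual is $\sup\{-D(\Vector{\nu},e\Vector{X}_{\setminus i}):\Vector{\nu}\geq 0,\ \sum_{j\neq i}\nu_j(\Vector{\alpha}(j)-\Vector{\alpha}(i))=\Vector{0}\}$) is exactly how the source argues; the care needed is not so much a Slater condition as handling the case where the infimum is not attained (recession directions), which the entropy dual accommodates because $D(\Vector{0},\cdot)=0$. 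With those two steps filled in, the bookkeeping you describe does yield \eqref{problem:SAGE-feas}.
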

One way to obtain lower bounds of a signomial $f$ is to solve the following REP:
\begin{align}
	\label{problem:SAGE}		
		\struc{p_{\text{SAGE}}} := \sup \{ b \in \mathds R: p - b \text{ is SAGE}\}.
	\tag{SAGE}
\end{align}
The constraints of~\cref{problem:SAGE} correspond to~\cref{problem:SAGE-feas}, after replacing $b_0$ by $b_0 - C$.

The second type of constraints has a size in $\mathcal O(n)$ but by restricting to the ambient space, we may assume $n\leq t$.
So the overall size of both the decision and the optimisation problem lies in $\mathcal O(t^2)$.
Most notably, it is independent of the degree $d$.
Recall, that we investigate sparse polynomials, which means $t \ll \binom{n+d}{d}$.

\subsection{Parametrised Complexity}
When solving a problem, one mainly is interested in \struc{efficient} algorithms, which usually means a running time \struc{polynomial in the input length}.
However, even a theoretically exponential time algorithm might be practically feasible, if the exponential part is sufficiently small.
These considerations have led to a whole hierarchy of complexity classes, but in this paper, we are only interested in the class of \struc{fixed-parameter tractable problems (\FPT{})}.
See \cite{Grohe:Logic:Graphs:Algorithms} for more details.

\begin{definition}
	\label{def:parametrised_problem}
	A \struc{\emph{parametrised problem}} is a pair $(P,\kappa)$ such that $P\subseteq\Sigma^*$ is a language and $\struc{\kappa}:\Sigma^*\to\mathds N$ is called the \struc{parameter}.
\end{definition}

\begin{definition}
	\label{def:FPT}
	The class \struc{\emph{\FPT{}}} is the class of all parametrised problems $(P,\kappa)$, where there exists a computable function $f:\mathds N\to\mathds N$ and a constant $c$, such that $x$ can be decided in time $\mathcal O(f(\kappa(x)) \cdot |x|^c)$.
\end{definition}
Note, that increasing the parameter here only affects a factor of the running time, but not the exponent $c$.
So for moderate values of the parameter, these problems can often be solved in practice.

In contrast, the often found description ``polynomial time for constant parameter'' describes the class $\struc{\XP}$.
More formally, it contains all problems such that there is a computable function $g:\mathds N\to\mathds N$ such that the problem can be solved in $\mathcal O\left(|x|^{g(\kappa(x))}\right)$.
We have strict containment $\FPT\subset\XP$, see e.g. \cite[Corollary 2.26]{Flum:Grohe:Para}.

\section{Branch and Bound}
\label{section:branch_and_bound}

In our previous paper \cite{Seidler:deWolff:POEM}, we described a method to obtain lower bounds for polynomials, as also given in \cref{subsection:SONC_algo}.
As initial step, we relaxed the polynomial by giving every possibly negative term a negative sign and then restricting ourselves to the positive orthant.
However, this is overly pessimistic, as can be seen in the following example.

\begin{example}
	Let $p=x^4+x^3-x+1$, which has minimum $\approx 0.682$.
	This polynomial is relaxed to $\relaxPoly{p} = x^4-x^3-x+1$, which has minimum $0$.
	\label{example:relaxation}
	\begin{figure}[!htbp]
		\centering
		\begin{tikzpicture}
			\draw[->] (0,-0.5) -- (0,3);
			\draw[->] (-2,0) -- (2,0);
			\draw plot[domain=-1:1] ({\x},{\x*\x*\x*\x + \x*\x*\x - \x + 1});
			\draw[dashed] plot[domain=-1.2:-1] ({\x},{\x*\x*\x*\x + \x*\x*\x - \x + 1});
			\draw[dashed] plot[domain=1:1.1] ({\x},{\x*\x*\x*\x + \x*\x*\x - \x + 1});
			\draw[red] plot[domain=-0.8:1.5] ({\x},{\x*\x*\x*\x - \x*\x*\x - \x + 1});
			\draw[red, dashed] plot[domain=-0.9:-0.8] ({\x},{\x*\x*\x*\x - \x*\x*\x - \x + 1});
			\draw[red, dashed] plot[domain=1.5:1.6] ({\x},{\x*\x*\x*\x - \x*\x*\x - \x + 1});
			\draw[-] (1,-0.1) -- (1,0.1);
			\draw[-] (-1,-0.1) -- (-1,0.1);
			\draw[-] (-0.1,1) -- (0.1,1);
			\draw[-] (-0.1,2) -- (0.1,2);
		\end{tikzpicture}
		\caption{Graphs of $p$ (black) and $\relaxPoly{p}$ (red)}
		\label{figure:example_relaxation}
	\end{figure}
\end{example}

To overcome this problem, we propose a branch-and-bound algorithm, where we branch over the signs of the variables.
By fixing a sign for a variable, some terms with an odd power are then known to be positive, so they can be regarded as monomial squares.
Hence, we do not have to cancel out their negative weight, but in addition gain new positive weights to cancel out the remaining negative terms.

To denote our restrictions on the signs of the variables, we introduce sign cones.
\begin{definition}
	\label{definitoin:sign_cone}
	Let $\Vector{s}\in\{-1,0,1\}^n$.
	We call $\Vector{s}$ a \struc{sign vector}, where $-1,0,1$ represents negative/unknown/positive sign, respectively, and define the corresponding \struc{sign cone} as
	\begin{align*}
		\struc{\signCone{s}} := \left\{\Vector{x}\in\mathds R^n: x_i\cdot s_i \geq 0 \text{ for all } i=1,\ldots,n\right\}.
	\end{align*}
	For some sign vector $\Vector{s}$, the \struc{positive points} are given by
	\begin{align*}
		\posPoints{p}{\Vector{s}} = \left\{\Vector{\alpha} \in \support{p} : \operatorname{sgn}(\Vector{\alpha}) \cdot \prod_{i=1}^n s_i^{\Vector{\alpha}_i \bmod 2} = 1\right\}
	\end{align*}
	with the convention $0^0=1$.
	The \struc{negative points} are $\negPoints{p}{s} := \support{p} \setminus\posPoints{p}{s}$.
	The corresponding \struc{positive} and \struc{negative terms} are the terms $b_{\Vector{\alpha}}\Vector{x}^{\Vector{\alpha}}$ for $\Vector{\alpha}\in \posPoints{p}{s}$ and $\Vector{\alpha}\in \negPoints{p}{s}$, respectively.
	The whole polynomial restricted to the domain $\signCone{s}$ we denote by $\struc{p_{\Vector{s}}}$.
\end{definition}
In this notation $p = p_{\Vector{0}}$.
Clearly, $\monoSquares{p} \subseteq \posPoints{p}{s}$ for any sign vector $\Vector{s}$.
So we obtain the new relaxation
\begin{align*}
	\struc{\relaxPoly{p}_{\Vector{s}}} = \sum_{\Vector{\alpha} \in \posPoints{p}{s}} b_{\Vector{\alpha}}\Vector{x}^{\Vector{\alpha}} - \sum_{\Vector{\beta} \in \negPoints{p}{s}} \left|b_{\Vector{\beta}}\right| x^{\Vector{\beta}}.
\end{align*}
For $\monoSquares{p} \neq \posPoints{p}{s}$, this is an improvement of the original relaxation.
Note, that by fixing more signs, the set of positive terms may only grow.

Our branch-and-bound algorithm creates a binary search tree, where each node has a sign vector and a flag, whether it is active or not.
Furthermore, we store the best known lower bound and the lowest found function value over the corresponding sign cone.
For simplicity, we identify the nodes with their sign vectors and denote the lower bounds as $\lowerBound{p_{\Vector{s}}}$.

Initially, the tree consists only of the root node, which is active and has sign vector $\Vector{s} = \Vector{0}$.
So it corresponds to $\signCone{s}=\mathds R^n$ as the domain and we compute bounds as in \Cref{subsection:SONC_algo}.
In each iteration, we then pick some active node, which becomes inactive.
If it satisfies any bounding criterion, we continue with the next iteration.
Otherwise, we determine some index $i$ with undetermined sign $s_i=0$ and create two new child nodes for $\Vector{s}$, where we update $\Vector{s}$ with $s_i=\pm 1$.
We compute lower bounds and minimisers for both nodes and mark them as active.
Then we continue with the next iteration.

\begin{algorithm}
	We have the following branch-and-bound blueprint.
	\begin{algorithmic}[1]
		\Require $p$ -- Polynomial
		\Ensure \lowerBound{p} -- Lower bound of $p$
		\Function{Branch}{p}
			\State run SONC, SAGE on $p$
			\State $\variableStyle{active} := \{p\}$
			\While{$\variableStyle{active} \neq \emptyset$}
				\State pick some $p_{\Vector{s}} \in \variableStyle{active}$
					\label{line:pick_active}
				\State $\variableStyle{active} = \variableStyle{active} \setminus\{p_{\Vector{s}}\}$
				\If{$p_{\Vector{s}}$ does not satisfy bounding criterion} 
						\label{line:bounding}
					\State pick index $i$ with $s_i=0$
						\label{line:pick_index}
					\State set $\Vector{s}^+$ as $\Vector{s}$ with $s_i=1$ and $\Vector{s}^-$ as $\Vector{s}$ with $s_i=-1$
					\State compute SONC/SAGE bounds for $p_{\Vector{s}^+}$ and $p_{\Vector{s}^-}$
					\State $\variableStyle{active} = \variableStyle{active} \cup \{p_{\Vector{s}^+},p_{\Vector{s}^-}\}$
				\EndIf
			\EndWhile
		\EndFunction
	\end{algorithmic}
	\label{algorithm:branch_and_bound_blueprint}
\end{algorithm}
Termination follows immediately, since there are at most $2^{n+1}-1$ polynomials involved, and once a polynomial is removed from $\variableStyle{active}$, it is never inserted again.

To investigate the algorithm, let \struc{$\subtree{\Vector{s}}$} denote the current subtree, with root $\Vector{s}$.
This means, we may regard the search tree at any \emph{intermediate state}, and the root of this subtree need not correspond to all of $\mathds R^n$.
First we observe the following.

\begin{remark}
	\label{remark:better_than_parent}
	Each node has a bound at least as good as its parent.
	From the mathematical side, this is clear since we restrict the domain.
	But also from the algorithmic side, the certificate of the parent is also a certificate for the node itself.
\end{remark}

With this observation, we can see how the search tree yields a global bound for a polynomial $p$.
\begin{lemma}
	A lower bound for the polynomial $p_{\Vector{s}}$ is given by
	\begin{align*}
		\inf p &\geq \min\left\{\max\left\{\lowerBound{v}: v\in P\right\}: P\text{ is maximal path in $\subtree{\Vector{s}}$}\right\} 
		= \min\left\{\lowerBound{l} : l \text{ is leaf in }\subtree{\Vector{s}}\right\}
	\end{align*}
	for any intermediate state of the search tree $\subtree{\Vector{s}}$.
	In particular, we get a lower bound for $p$ from the tree $\subtree{\Vector{0}}$.
	\label{lemma:bound_formula}
\end{lemma}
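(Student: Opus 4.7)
The plan is to exploit two structural properties of the search tree: (i) the sign cones of the leaves partition (up to the boundary) the sign cone of the root, so each point in the domain lies in some leaf's cone; and (ii) by \Cref{remark:better_than_parent}, bounds are monotone non-decreasing along any root-to-leaf path, so the maximum along a path is always realised at its endpoint.

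First I would verify the decomposition $\signCone{\Vector{s}} = \bigcup_{l \text{ leaf in }\subtree{\Vector{s}}} \signCone{l}$. This follows by induction on the height of $\subtree{\Vector{s}}$: if $\Vector{s}$ is itself a leaf the statement is trivial, and otherwise the two children $\Vector{s}^+$ and $\Vector{s}^-$ obtained by fixing some coordinate $s_i$ to $\pm 1$ satisfy $\signCone{\Vector{s}} = \signCone{\Vector{s}^+}\cup\signCone{\Vector{s}^-}$, after which one applies the induction hypothesis to each child subtree.

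Second, for any $\Vector{x}\in\signCone{\Vector{s}}$ pick a leaf $l$ of $\subtree{\Vector{s}}$ with $\Vector{x}\in\signCone{l}$; by the SONC/SAGE computation performed for $l$ we have $p(\Vector{x}) = p_l(\Vector{x}) \geq \lowerBound{l} \geq \min\{\lowerBound{l'}: l'\text{ leaf}\}$. Taking the infimum over $\Vector{x}\in\signCone{\Vector{s}}$ yields
\[
	\inf p_{\Vector{s}} \;\geq\; \min\{\lowerBound{l} : l \text{ is leaf in } \subtree{\Vector{s}}\}.
\]

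Finally, to relate this to the formulation with maximal paths, I would invoke \Cref{remark:better_than_parent}: along any maximal path $P$ from the root $\Vector{s}$ to a leaf $l_P$, the bounds $\lowerBound{v}$ for $v\in P$ are non-decreasing, so $\max\{\lowerBound{v}:v\in P\} = \lowerBound{l_P}$. Since maximal paths in $\subtree{\Vector{s}}$ are in bijection with leaves, minimising over paths agrees with minimising over leaves, which proves the claimed equality and completes the chain of inequalities. The case $\Vector{s}=\Vector{0}$ gives the bound for $p$ itself.

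The only mildly delicate point is (i): one must check that the branching step really gives $\signCone{\Vector{s}}=\signCone{\Vector{s}^+}\cup\signCone{\Vector{s}^-}$ rather than a proper subset, which is immediate from \Cref{definitoin:sign_cone} because fixing $s_i\in\{-1,+1\}$ covers all real values of $x_i$ (the hyperplane $x_i=0$ lies in both cones). Everything else is bookkeeping.
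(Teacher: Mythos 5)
Your proposal is correct and follows essentially the same route as the paper's proof: the monotonicity of bounds along root-to-leaf paths (via \Cref{remark:better_than_parent}) reduces the path formulation to the leaf formulation, and the covering of $\signCone{\Vector{s}}$ by the leaves' sign cones gives the global bound as the minimum over leaves. The only difference is that you spell out the covering property by induction on the tree height, which the paper simply asserts.
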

\begin{proof}
	Since any node has lower bound at least as good as its parent, the maximum of any path is attained at its endpoint, which is a leaf of $\subtree{\Vector{s}}$.
	The sign cones, represented by the leaves, partition the whole space $\signCone{s}$.
	Put $M:=\min\left\{\lowerBound{l} : l \text{ is leaf in }\subtree{\Vector{s}}\right\}$.
	Let $\Vector{s}_1,\ldots,\Vector{s}_r$ be the leaves of $\subtree{\Vector{s}}$.
	Then, in each of these sign cones, we have $\inf p_{\Vector{s}}\geq \lowerBound{p_{\Vector{s}_i}}\geq M$.
	Hence, globally we have $\inf p_{\Vector{s}}\geq M$.
\end{proof}

However, the method in \cref{algorithm:branch_and_bound_blueprint} so far only presents a blueprint.
The following steps still have to be made more precise.
\begin{itemize}
	\item Which node $\Vector{s}$ do we pick in \cref{line:pick_active}?
	\item What are our bounding criteria in \cref{line:bounding}?
	\item Which index $i$ do we choose in \cref{line:pick_index}?
\end{itemize}
We address these issues in the following subsections.

\subsection{Bounding Criteria}
\label{subsection:bounding}

A crucial part in a branch-and-bound algorithm is to have efficient bounds.
So we need some easily checkable criteria, which allow us to cut off a branch of the search tree.

\begin{definition}
	\label{definition:cut_criteria}
	Let $\Vector{c}$ be our current node.
	We define the following criteria for cutting off branches.
	\begin{description}
		\item[\namedlabel{item:min}{Min}]
			If we have found some argument $\Vector{x}$ such that $p(\Vector{x})\leq \lowerBound{p_{\Vector{c}}}$, then we cut off the branch at $\Vector{c}$.
		\item[\namedlabel{item:leaf}{Leaf}]
			If there is some leaf $\Vector{s} \in \{-1,1\}^n$, i.e. leaf $\Vector{s}$ lies at depth $n$, with $\lowerBound{p_{\Vector{s}}} \leq \lowerBound{p_{\Vector{c}}}$, then we cut off the branch at $\Vector{c}$.
			\qedhere
	\end{description}
\end{definition}

\begin{lemma}
	The above cut criteria are correct.
	\label{lemma:cut_criteria_correct}
\end{lemma}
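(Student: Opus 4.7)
The plan is to show that applying either cut criterion at a node $\Vector{c}$ preserves the validity of the lower bound produced by the algorithm: if the final tree $\subtree{\Vector{0}}$ has $\Vector{c}$ as a leaf (because it was cut), then the algorithm's output $B = \min_l \lowerBound{l}$ still satisfies $B \leq \inf p$. The simplest version of this claim follows directly from Lemma \ref{lemma:bound_formula}, since cutting at $\Vector{c}$ only prescribes one particular way of building a valid tree of sign vectors whose leaves partition $\mathds R^n$; under this reading the lemma is a safety check and its proof reduces to an invocation of Lemma \ref{lemma:bound_formula}.

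The sharper reading, which I would also address, is that cutting does not worsen the overall bound compared with continuing to branch at $\Vector{c}$. To this end I compare the ``cut'' scenario (where $\Vector{c}$ stays a leaf) with the ``no cut'' scenario (where $\Vector{c}$'s subtree is fully explored). By Remark \ref{remark:better_than_parent}, every leaf in the subtree rooted at $\Vector{c}$ has bound at least $\lowerBound{p_{\Vector{c}}}$, so the ``no cut'' contribution from that subtree is some value $M \geq \lowerBound{p_{\Vector{c}}}$, whereas cutting contributes exactly $\lowerBound{p_{\Vector{c}}}$. Writing $R$ for the minimum $\lowerBound{}$-value over the remaining (identical) leaves, this yields $B_{\text{cut}} = \min(\lowerBound{p_{\Vector{c}}}, R) \leq \min(M, R) = B_{\text{no cut}}$, and to obtain equality it suffices to establish $B_{\text{no cut}} \leq \lowerBound{p_{\Vector{c}}}$ under each criterion.

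For the Min criterion the witness $\Vector{x}$ immediately gives $\inf p \leq p(\Vector{x}) \leq \lowerBound{p_{\Vector{c}}}$, and since Lemma \ref{lemma:bound_formula} applied to the ``no cut'' tree yields $B_{\text{no cut}} \leq \inf p$, we obtain $B_{\text{no cut}} \leq \lowerBound{p_{\Vector{c}}}$ as required. For the Leaf criterion the witness $\Vector{s} \in \{-1,1\}^n$ is a permanent leaf at depth $n$ (it has no free sign left to fix, so it can never be expanded further) that lies outside $\Vector{c}$'s subtree, because at the moment $\Vector{c}$ is processed it has no descendants yet, each sign vector occupies a unique position in the tree, and $\Vector{s}$ was already reached along a different branch from the root; hence $R \leq \lowerBound{p_{\Vector{s}}} \leq \lowerBound{p_{\Vector{c}}}$ directly, again giving $B_{\text{no cut}} \leq \lowerBound{p_{\Vector{c}}}$. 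The only mild obstacle in the whole argument is this bookkeeping step that locates $\Vector{s}$ outside $\Vector{c}$'s subtree; everything else is a short comparison of two minima together with one application of Lemma \ref{lemma:bound_formula}.
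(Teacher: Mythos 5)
Your proof is correct and follows essentially the same route as the paper's: both arguments rest on Remark~\ref{remark:better_than_parent} (descendants of $\Vector{c}$ can only have larger bounds) together with Lemma~\ref{lemma:bound_formula}, using the witness point $\Vector{x}$ for \ref{item:min} and the permanent depth-$n$ leaf $\Vector{s}$ for \ref{item:leaf} to show the overall minimum over leaves is unaffected by further branching at $\Vector{c}$. Your write-up is somewhat more explicit than the paper's (it formalises the cut versus no-cut comparison and proves the two resulting bounds are equal), but the underlying idea is identical.
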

\begin{proof}
	Since both criteria are independent, we show their correctness separately.
	\begin{description}
		\item[\ref{item:min}]
			According to \cref{lemma:bound_formula}, our lower bound is
			\begin{align*}
				M:=\min\left\{\lowerBound{p_{\Vector{s}}} : \Vector{s} \text{ is leaf in } T\right\}.
			\end{align*}
			As observed before, in any state, the sign cones cover $\mathds R^n$, so let $\Vector{x} \in \signCone{s}$ for some $\Vector{s}$.
			Then, of course, $\lowerBound{p_{\Vector{s}}} \leq p(\Vector{x})$.
			Hence, branching $\Vector{c}$ any further, would only increase $\lowerBound{p_{\Vector{c}}}$ but not affect the minimum.
			So we can cut off the branch at $\Vector{c}$.			
		\item[\ref{item:leaf}]
			Since $\Vector{s} \in \{-1,1\}^n$, we cannot branch it any further, so its bound will not improve.
			Therefore, $M\leq \lowerBound{p_{\Vector{s}}}$.
			Again, increasing $\lowerBound{p_{\Vector{c}}}$ will then not affect the minimum, so can cut off the branch at $\Vector{c}$.
		\qedhere
	\end{description}
\end{proof}

Both criteria can easily be checked.
We separately store the lowest value, we have found, and the lowest value of some leaf $\lowerBound{p_{\Vector{s}}}$ for $\Vector{s}\in\{-1,1\}^n$.
(As long as we have not processed any such node, this value is $\infty$.)
Then both \ref{item:min} and \ref{item:leaf} are single comparisons, which take $\mathcal O(1)$ time.

\subsection{Choice of Branching Node}
\label{subsection:bnb_branch_choice}

Another problem to be addressed is the choice of the node, on which to branch in \cref{line:pick_active}.

With regard to the quality of the solution, the best choice is to choose the node with the smallest lower bound.
As we have seen in \cref{lemma:bound_formula}, the final bound is the smallest bound of any leaf.
Hence, if we do not improve this worst bound, the final bound will not improve.
The main disadvantage is, that the number of active leaves can grow exponentially, so this requires space exponential in $n$.

If memory is an issue, then the tree should be traversed in a depth-first-search.
With this strategy, we ensure $|\variableStyle{active}|\leq n+1$, so the computation runs in polynomial space.
The significant disadvantage is the higher running time, because we compute more nodes than with the previous strategy.

\subsection{Practical Improvements}
\label{subsection:bnb_practical_improvements}

If we choose the node with the worst bound for further branching, then the criterion \ref{item:min} never applies.
However, for numerical computations, we use a relaxed version.
Let $p(\Vector{m})$ be the lowest function value we found so far and let $\varepsilon$ be some given accuracy.
If for the current node $\Vector{c}$ we have $\lowerBound{p_{\Vector{c}}} \geq p(\Vector{m}) + \varepsilon$, we stop the whole computation, because we already have solved the problem up to accuracy $\varepsilon$.
To use this criterion, we integrate the computation of local minima via \SONCmin{} into the branch-and-bound method.
The global minimum is also more likely to be found in the sign cone with the worst lower bound.
So, whenever we compute bounds for a sign cone, we also search for a  local minimiser via \cref{algorithm:min_polynomial}.
However, the minima we compute for the circuit polynomials always lie in the positive orthant.
Hence, our starting point $\relaxPoly{\minimum}$ for the minimisation lies in the positive orthant as well.
To comply with our orthant restriction, we flip some signs of our starting point and define
\begin{align*}
	\relaxPoly{\minimum}' = \begin{cases}
		-\relaxPoly{m}_i &: s_i = -1 \\
		\relaxPoly{m}_i &: \text{else}
	\end{cases}.
\end{align*}
as use $\relaxPoly{\minimum}'$ as our starting point in \cref{line:min_p}, where we run gradient descent for $p$.

Furthermore, we adjust \ref{item:leaf} as follows.
Once we reach a node $\Vector{s}\in\{-1,1\}^n$, i.e. all signs are known we stop the whole algorithm.
We cannot improve this node by further branching, since by our choice of $\Vector{s}$ and \cref{lemma:bound_formula}, we already have $\lowerBound{p} = \lowerBound{p_{\Vector{s}}}$.
Hence, we cannot improve the bound of $p$ by our approach any more.

All combined, this yields the following algorithm.
\begin{algorithm}
	We have the following branch-and-bound algorithm, whose running time is fixed parameter tractable in the number of variables $n$.
	\begin{algorithmic}[1]
		\Require $p$ -- Polynomial
		\Ensure \lowerBound{p} -- Lower bound of $p$
		\Function{\algoBnB{}}{p}
			\State run SONC, SAGE on $p$
			\State $\variableStyle{min} \gets \operatorname{SONC-Min}(p)$
			\State $\variableStyle{active} \gets \{p\}$
			\While{$\variableStyle{active} \neq \emptyset$}
			\State $\Vector{s} \gets \operatorname{argmin} \left\{ \lowerBound{p_{\Vector{s}}} : \Vector{s} \in \{-1,0,1\}^n, p_{\Vector{s}} \in \variableStyle{active} \right\}$
					\label{line:worst_bound}
					\Comment{cone with worst bound}
				\State $\variableStyle{active} \gets \variableStyle{active} \setminus\{p_{\Vector{s}}\}$
				\If{$p_{\Vector{s}}$ satisfies \ref{item:min} or \ref{item:leaf}} 
					\Comment{see \Cref{subsection:bounding}}
				\State \Return $\lowerBound{p_{\Vector{0}}}$
				\EndIf
				\State compute SONC/SAGE for $p_{\Vector{s}^+}$ and $p_{\Vector{s}^-}$
				\State propagate new bound upwards
				\If{$\operatorname{SONC-Min}(p_{\Vector{s}^+}) < \variableStyle{min}$ or $\operatorname{SONC-Min}(p_{\Vector{s}^-}) < \variableStyle{min}$}
					\State update $\variableStyle{min}$
				\EndIf
				\State $\variableStyle{active} \gets \variableStyle{active}\cup\{p_{\Vector{s}^+}, p_{\Vector{s}^-}\}$
			\EndWhile
		\EndFunction
	\end{algorithmic}
	\label{algorithm:branch_and_bound}
\end{algorithm}
\begin{proof}
	Our search tree is a binary tree of height at most $n$, so it has at most $2^{2n+1}-1$ nodes, which means at most $2^{n+1}-1$ different polynomials are involved.
	Furthermore, in each state, exactly the leaves are in $\variableStyle{active}$ and we never remove nodes from the tree.
	Therefore, each polynomial is chosen at most once in the loop in \cref{line:worst_bound}.
\end{proof}

As a final variation, it turned out that SAGE takes significantly longer than SONC, but for most sign cones the bound computed via SONC suffices.
So, initially we only compute a lower bound via SONC for each $p_{\Vector{s}}$.
If some node is chosen for the first time, we then compute a lower bound via SAGE and the node remains active.
Only if this node is chosen a second time, it becomes inactive and we branch into the two sub-cones.

\section{Minimal Orthants}
\label{section:minimal_orthants}

As alternative to the branch-and-bound algorithm, we can find a sufficient subset of the leaves and directly compute lower bounds for these polynomials.

As soon as we are given a concrete orthant,~i.e. we know the sign of every variable, we can compute the \struc{effective sign} of each term,~i.e. we know whether it is positive or negative.
To keep consistent with our previous notation for the relaxation, we also denote this polynomial as $\relax{p} = \left(A(p), \relax{b(p)}\right)$.
Now we define a partial order $b_1\leq b_2$ on the effective coefficient vectors as elementwise $\leq$.
This lifts to a partial order on the polynomials
\begin{align*}
	\polynomial{A}{b_1} \leq \polynomial{A}{b_2} :\Leftrightarrow b_1\leq b_2.
\end{align*}
Going over all orthants yields $2^n$ polynomials.
But the crucial observation is that we only need to compute bounds for the \emph{minimal} polynomials.
\begin{example}
	\label{example:minimal_orthants}
	Consider the following polynomial with 3 variables.
	\begin{align*}
		p&=2.723 + 3.932\cdot x_2^{8} + 6.054\cdot x_1^{2} + 1.963\cdot x_1^{4} x_2^{2} - 1.204\cdot x_0^{1} x_1^{1} x_2^{3} + 1.462\cdot x_0^{1} x_1^{2} x_2^{1}\\
		&\quad+ 1.766\cdot x_0^{1} x_1^{2} x_2^{2} + 0.841\cdot x_0^{1} x_1^{2} x_2^{4} - 0.329\cdot x_0^{2} x_1^{1} x_2^{2} + 7.57\cdot x_0^{2} x_1^{2} x_2^{4} + 2.428\cdot x_0^{4} x_2^{2}
	\end{align*}
	Then the minimal orthants are given by the signs $(-,+,+)$, $(-,+,-)$ and $(-,-,+)$.
	So instead of solving $8=2^3$ instances, we only have to solve the three instances where we restrict $p$ to each of the above orthants.
\end{example}

\subsection{Computing Minimal Orthants}
\label{subsection:computing_minimal_orthants}

For convenience, we define the indicator function for strictly negative terms
\begin{align*}
	\struc{\negative}(x) = \begin{cases}1&:x<0\\0&:x\geq0\end{cases}.
\end{align*}
If called on a vector, the function is applied elementwise.

\begin{algorithm}
	Computing the orthants with minimal coefficient vector is fixed parameter tractable in $n$, via the following algorithm.
	\begin{algorithmic}
		\Require $p$ -- Polynomial
		\Ensure $\minList$ -- Set of orthants, where coefficients have minimal effective sign
		\Function{MinimalOrthants}{p}
			\State $\minList = \emptyset$
			\For{$\variableStyle{sign} \in \{0,1\}^n$} \Comment{fork over all orthants}
				\State $v = \left(\variableStyle{sign} \cdot A + \negative(b)\right) \bmod 2$
				\For{$(e,s) \in \minList$}
					\If{$e \leq v$}
						\State continue with next $\variableStyle{sign}$ \Comment{$v$ is not minimal}
					\EndIf
					\If{$v < e$}
						\State $\minList.remove(e,s)$ \Comment{$e$ is not minimal}
					\EndIf
				\EndFor
				\State $\minList.add(v,\variableStyle{sign})$ \Comment{$v$ is minimal, if we reach the end of the for-loop}
			\EndFor
		\EndFunction
	\end{algorithmic}
	\label{algorithm:minimal_orthants}
\end{algorithm}
\begin{proof}
	Let $t'$ be the number of non-squares.
	The length of $\minList$ is bounded by both $2^n$ and the length of the maximal antichain $\binom{t'}{t'/2}$.
	Furthermore, each comparison runs in $\mathcal O(t')$.
	So the overall running time is 
	\begin{align*}
		\mathcal O\left(t'\cdot 2^n\cdot \min\left(2^n, \binom{t'}{t'/2}\right)\right) 
		\subseteq \mathcal O\left(t\cdot 4^n\right)
	\end{align*}
	which is fixed parameter tractable in $n$.
\end{proof}
In particular, the proof shows that this approach is useful for polynomials with few non-squares.


Experiments show that for $n=10$ variables and $t'=100$ this can be done in about 2 seconds.
We consider the problem of determining the minimal orthants practically feasible for values $n\leq 15$.

Then, we create polynomials
\begin{align*}
	p_v = \polynomial{A}{v} \quad\text{ for all }(v,\variableStyle{sign})\in \minList
\end{align*}
and optimise each over the positive orthant.
Hence, the running time significantly depends on the number of minimal orthants.
For polynomials with many monomial non-squares, we usually have $|\minList| = 2^n$, but for instances with few monomial non-squares, we significantly reduce the running time by restricting ourselves to the minimal orthants.
For the final lower bound, we then get
\begin{align*}
	\lowerBound{p} = \min\left\{\lowerBound{p_v} : (v,\variableStyle{sign}) \in \minList\right\}.
\end{align*}
The advantage of this approach, compared to the search tree, is its easy parallelisation.
The major disadvantage is that a numerical failure in a single polynomial $p_v$ already causes the trivial bound $\lowerBound{p} = -\infty$.
We discuss the quality of the results and the frequency of this problem in \cref{section:experimental_results}.

\subsection{Reducing the Search Tree}
\label{subsection:reducing_tree}

The idea of this section also gives rise to a variant of the branch-and-bound approach from \cref{section:branch_and_bound}.
First we compute the minimal orthants $\minList$ as described in \cref{subsection:computing_minimal_orthants}.
Then we create a tree whose leaves are the elements of $\minList$ and we branch the signs of the variables $x_1,\ldots,x_n$ in that order.
Whenever we compute a node of the tree, that only has a single child, we further descend down the tree, until we arrive at a node with two children, or a leaf.
Otherwise we apply the same algorithm as in \cref{section:branch_and_bound}, including the criteria for cutting off a branch.
We denote this algorithm by \algoBnB-sparse.

\section{Experimental Results}
\label{section:experimental_results}

We start by discussing the running time and the results of the algorithms presented above on a few selected examples.
Afterwards, we describe how the algorithms behaved on a large set of test cases.

\subsection{Experimental Setup}
\label{subsection:experimental:setup}

We give an overview about the experimental setup.

\textbf{Software} 
	The entire experiment was steered by our \textsc{Python} 3.7 based software \textsc{POEM} 0.3.0.0(a) (Effective Methods in Polynomial Optimisation), \cite{poem:software}, which we develop since July 2017. 
	\textsc{POEM} is open source, under GNU public license, and available at:
	\begin{center}
		\url{\POEMhomepage}
	\end{center}
	For our experiment, \textsc{POEM} calls a range of further software and solvers for computing the certificates.
	For the numerical solutions of SONC and SAGE, we use \textsc{CVXPY} 1.0.28 \cite{cvxpy}, to create the convex optimisation problems.
	To solve the problems, we use \ecos{} 2.0.7 \cite{ecos}, \mosek{} 9 \cite{mosek} and \cvxopt{} 1.2.2 \cite{Andersen:Dahl:Vandenberghe:CVXOPT}.

	As heuristic to find local minima of polynomials we use Müller's approach as decribed in \cref{subsection:minima}.
	In addition, we call local minimisation methods from random starting points and differential evolution from \textsc{SciPy} 1.4.1. \cite{scipy:software}.

\textbf{Investigated Data} 
	The experiment was carried out on a database containing \instancesBnB{} randomly generated polynomials.
	The possible numbers of variables are $n=\variables$; the degree takes various (even) values $4\leq d\leq 60$ and the number of terms can be $t=\Myterms$.

	We created the examples using \textsc{POEM}, and they are available in full at the homepage cited above.
	The instances investigated here, are a subset of those from \cite{Seidler:deWolff:POEM}.
	In that paper, we also describe their creation in more detail.
	The overall running time for all our instances was \totalTimeBnB{}~hours.

\textbf{Hardware and System} 
	We used an \verb+Intel Core i7-8550U+ CPU with 1.8 GHz, 4 cores, 8 threads and 16 GB of RAM under Ubuntu 18.04 for our computations.

\textbf{Stopping Criteria}
	For the accuracy of the solver and the precision of the rounding in \textsc{Python} we used a tolerance of $\varepsilon=2^{-23}$.
	We restrict ourselves to $n\leq 8$, to keep the potential factor $2^n$ for the running time in a reasonable range.
	Furthermore, when running SAGE, we noticed a significant increase in both run time, memory consumption and occurrences of numerical problems for $t\geq 100$. 
	Therefore, we restrict ourselves to $t\leq 50$ (which was the next lower number of terms in our example set).

\subsection{Selected Examples}

\begin{example}
	We consider a polynomial with $n=4$ variables, degree $d=16$ and $t=50$ terms.
	Here we particularly see, how the branch and bound approach significantly improved the bound.
	The lowest value we found, is $\xmin \approx 19.203$.
	We present lower bounds we obtained in \cref{table:example_10796}.

	The best bound was found by \algoBnB.
	The approach by \algoFork{} failed, because for at least one of the orthants, both SONC and SAGE encountered numerical issues.
	Since the overall bound is given by the worst bound on any of the orthants, we only obtain the trivial bound $-\infty$.

	Next, we observe, that the sparse version of \algoBnB{} here actually takes \emph{longer} than the standard version.
	Both methods compute 23 out of 31 possible nodes of the search tree.
	So the sparse method does not have any advantage.

	Furthermore, it computes a \emph{worse} bound.
	The reason for the latter is, that we branch the variables in a different order.
	At some point, SAGE runs into numerical problems, and these issues arise at different nodes in the search tree.
	The remaining bounds are then computed with the weaker, but more stable SONC method.
	Therefore, the two versions of \algoBnB{} can have different results.
	\begin{table}
		\begin{tabular}{ccccl}
			lower bound       & difference &  time   & strategy & options \\
			\hline
			11.992  	& 7.211      &   0.17  & SONC      \\
			13.693  	& 5.510      &   0.19  & SONC      & alternative covering\\
			14.458  	& 4.745      &   3.09  & SAGE      \\
			18.769  	& 0.434      &  53.98  & \algoBnB  \\
			18.284  	& 0.918      &  56.43  & \algoBnB, & sparse  \\
			$-\infty$ & $\infty$   &   2.26  & \algoFork, & SONC only      \\
			$-\infty$ & $\infty$   &   9.97  & \algoFork, & SONC and SAGE      \\
		\end{tabular}
		\caption{Comparison of time and quality of the lower bounds obtained by different approaches.
		Branch and bound yields the best bound.
		The forking algorithm numerically failed on some orthant, so its bound is $-\infty$.}
		\label{table:example_10796}
	\end{table}
	\label{example:10796}
\end{example}

\begin{example}
	The next example is a polynomial with $n=4$ variables, degree $d=10$ and $t=30$ terms.
	Here, \algoBnB-sparse computes the optimal bound $\xmin \approx 4.08$, while all of our other methods have an optimality gap of at least $0.27$.
	Together with \cref{example:10796}, it shows that \algoBnB{} and \algoBnB-sparse are in general incomparable in terms of their results.
	\begin{table}
		\begin{tabular}{ccccl}
			lower bound & difference &  time & strategy & options \\
			\hline
			-58.80      & 62.88      &  0.12 & SONC\\
			-51.84      & 55.92      &  0.20 & SONC& alternative covering\\
			-25.65      & 29.73      &  1.46 & SAGE\\
			  3.80      & 0.277      & 31.98 & \algoBnB \\
			  4.08      & 0          & 35.84 & \algoBnB , & sparse  \\
			  0.40      & 3.681      &  1.39 & \algoFork, & SONC only      \\        
			  3.80      & 0.277      &  4.29 & \algoFork, & SONC and SAGE      \\    
		\end{tabular}
		\caption{Comparison of time and quality of the lower bounds obtained by different approaches.
		Here, the sparse version of \algoBnB{} computed the best bound but also took the longest tie.
		All of our other methods computed worse bounds.}
		\label{table:example_10100}
	\end{table}
	\label{example:10100}
\end{example}


\subsection{Evaluation of the Experiment}

In this section, we summarise our findings from running our experiment on \instancesBnB{} instances.

\begin{description}
	\item[The bound of \algoBnB{} is at least as good as the bound of \algoFork{}.]
		\cref{figure:bar_plot_diff_traverse_fork} shows the difference between the lower bounds obtained by \algoBnB{} and \algoFork, where a positive value means that \algoBnB{} gave a better bound.
		In the majority of cases the difference is numerically zero, but in some cases, \algoBnB{} performs significantly better.
		In no case the differences goes below $-10^{-5}$.

		\begin{figure}
			
	\input{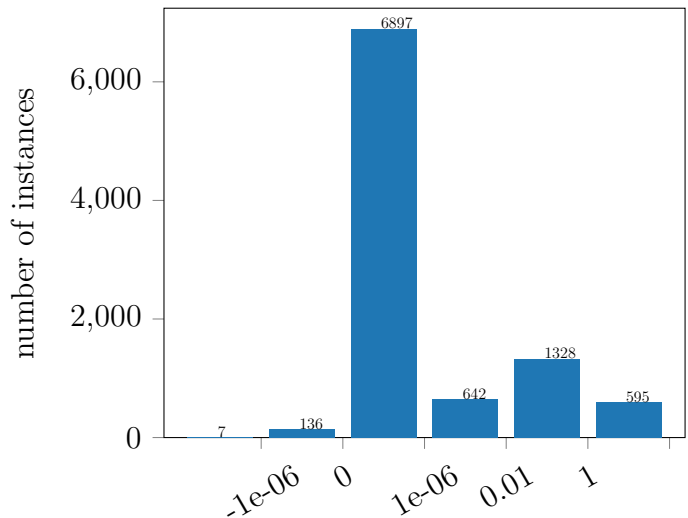}

			\caption{Difference between the bounds of \algoBnB{} and \algoFork{}.
			Practically, the result of \algoBnB{} is \emph{always} as least as good as the one of \algoFork{}.
			In the majority of our examples the difference is numerically zero.}
			\label{figure:bar_plot_diff_traverse_fork}
		\end{figure}
	\item[Sparse \algoBnB{} is slightly faster than standard \algoBnB.]
		The quotients of the running time for standard \algoBnB{} divided by the time of sparse \algoBnB{} range from \traverseQuotMin{} to \traverseQuotMax{} with a geometric mean of \traverseQuotGM{}.
		So, on average, the standard version takes about \traverseQuotPercent\% longer.
		The distribution of these quotients is shown with more detail in \cref{figure:bar_plot_traverse_quot}.
		In particular, in the majority of our cases, the run times differ by a factor of at most 2.
		\begin{figure}
			
	\input{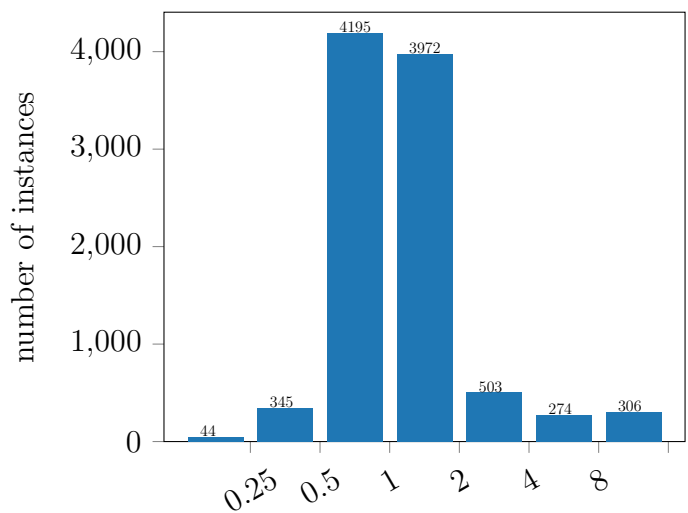}

			\caption{Quotient of runtimes of standard \algoBnB{} divided by sparse \algoBnB{}.
			In most cases the times differ by a factor of at most 2, but overall the sparse version has a slight advantage.}
			\label{figure:bar_plot_traverse_quot}
		\end{figure}
	\item[Failure of \algoFork{} is rare.]
		As seen in \cref{example:10796}, \algoFork{} may return the trivial bound $-\infty$.
		However, this only rarely happends.
		Among our test cases, there are only \boundForkFail{} instances, where \algoFork{} fails, but (at least one variant of) \algoBnB{} finds a lower bound.

	\item[Run time of \algoBnB{} and \algoFork, depending on $n,t$]
		As expected, the running time increases with both $n$ and $t$.
		In \cref{table:traverse_time} and \cref{table:fork_time}, we present the running times of \algoBnB{} and \algoFork{}, depending on $n$ and $t$.
		Since we observed in \cref{subsection:background_sage} and \cite{Seidler:deWolff:POEM}, that the running time is independent of the degree, we average over the degree as well.
		We can see, that the growth in $n$ is slower than $\Theta(2^n)$.
		Also note, that we have at least $n+1$ monomial squares, so for few terms the ratio of non-squares decreases with growing $n$.
		Thus, in these cases, the running time even \emph{decreases} with growing $n$.
		\newcommand{\fewInstances}[1]{#1$^*$}
		\begin{table}
			\centering
			\tableTraverseTime
			\caption{Average runtime of standard \algoBnB{}, depending on $n$ and $t$.
			A \fewInstances{} marks parameters, where we have less than 10 instances.
			}
			\label{table:traverse_time}
		\end{table}
		\begin{table}
			\centering
			\tableForkTime
			\caption{Average runtime of \algoFork{} with SAGE, depending on $n$ and $t$.
			A \fewInstances{} marks parameters, where we have less than 10 instances.
			}
			\label{table:fork_time}
		\end{table}

	\item[Optimality Gap]
		In \cref{figure:diff_value_bound}, we can see the distribution of the optimality gap among our instances, for how many instances the gap lies in the given interval.
		For the left bars (blue), we combined all of our new methods and took their best bound.
		The right bars (orange) show the distribution of the optimality gap, when just using SAGE.
		We computed the (local) minima via \algoBnB{} along with the lower bounds.

		For \smallGap{} instances, about \smallGapPercent{}\%, our methods yield a gap of at most $10^{-6}$, which we consider numerically zero.
		Furthermore, we see a clear improvement compared to using only a single call of SAGE.

		\begin{figure}
			
	\input{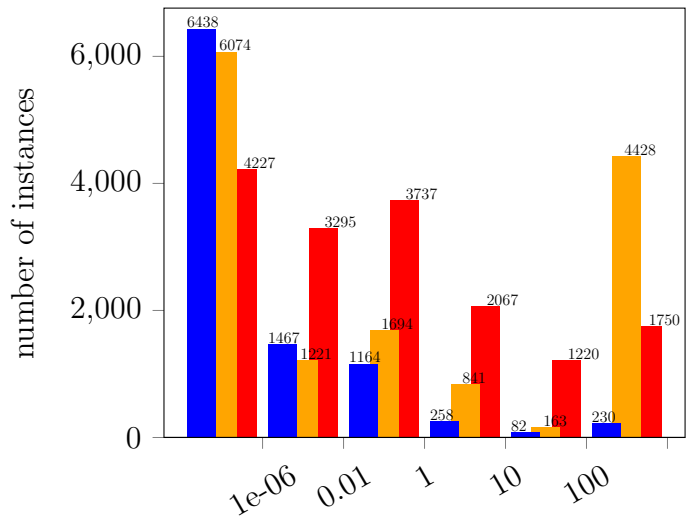}

			\caption{Distribution of the optimality gap;
			Left, in blue, we show the optimality gap for the combined methods standard-\algoBnB{}, sparse-\algoBnB{} and \algoFork{} with SAGE, taking their best bound.
			The right bars, in orange, show the optimality gap if we just use SAGE to compute a global lower bound.
			}
			\label{figure:diff_value_bound}
		\end{figure}
\end{description}

\section{Resume and Outlook}
\label{section:conclusion}

Our paper consists of two main contributions.
The first contribution is a branch-and-bound framework, where we branch over the signs of the variables.
Thus, additional terms can be identified as positive, which improves the lower bounds obtained via SONC and SAGE.
Second, we give an alternative to the branch-and-bound approach.
We identify the minimal orthants with respect to their sets of positive terms.
In these orthants we compute lower bounds for the given polynomial and the worst of these bounds is a global lower bound.
The two algorithms are fixed parameter tractable when parametrised by the number of variables.

We ran these methods on a larger number of test cases and draw the following conclusions.
\begin{enumerate}
	\item To obtain the best result, the method of choice is \algoBnB{}.
		However, between standard-\algoBnB{} and sparse-\algoBnB{} there is no clear favourite, which approach computes a better bound.
		Only with respect to the running time there is a slight advantage for using the sparse version.
	\item Especially for few terms, \algoFork{} runs significantly faster than \algoBnB{}.
		This speed-up partially comes from parallel computations, but also from our preprocessing, so we run fewer instances of SONC and SAGE.
	\item Computing the minimal orthants for \algoFork{} runs fast.
		So it is possible to a priori get an estimate of the running time of \algoFork{}.
		If the gain from eliminating orthant is too small, we can simply run \algoBnB{} instead.
\end{enumerate}

The most interesting course for further work is to find better cut criteria for the branch-and-bound approach.
These might significantly improve the running time of our algorithm.
Also, the order, in which we branch the variables, is important for the size of the search tree and thus, for the running time.
A significant advantage for \algoFork{} is its parallelisation.
To speed up \algoBnB{}, we can compute several nodes of the search tree in parallel.

To improve the results of \algoFork{}, we can identify the orthants, where some computation failed.
These orthants correspond to leaves in the search tree of \algoBnB{}.
By moving up in the tree and computing lower bounds for these nodes, we still obtain lower bounds for the original orthant.

Finally, we want to emphasise, that both \algoBnB{} and \algoFork{} are just frameworks, which use SONC and SAGE.
So any improvement for these, which could be quality of results, running time or numerical stability, results in an improvement for the algorithms in this paper.

\bibliographystyle{halphainit}
\bibliography{sonc_branch_and_bound}

\end{document}